\author{Florin Ambro} 
\address{ Institute of Mathematics ``Simion Stoilow'' of the Romanian Academy\\
P.O. BOX 1-764, RO-014700 Bucharest\\ 
Romania.}
\email{florin.ambro@imar.ro}
\newcommand{\Q}{{\mathbb Q}}
\newcommand{\Z}{{\mathbb Z}}
\newcommand{\N}{{\mathbb N}}
\newcommand{\R}{{\mathbb R}}
\newcommand{\bP}{{\mathbb P}} 
\newcommand{\cA}{{\mathcal A}}
\newcommand{\cF}{{\mathcal F}}
\newcommand{\cI}{{\mathcal I}}
\newcommand{\cO}{{\mathcal O}}
\newcommand{\Bs}{\operatorname{Bs}}
\newcommand{\ind}{\operatorname{index}}
\newcommand{\Supp}{\operatorname{supp}}
\theoremstyle{plain}
\newtheorem{thm}{Theorem}[section]
\newtheorem{lem}[thm]{Lemma}
\newtheorem{cor}[thm]{Corollary}
\newtheorem{prop}[thm]{Proposition}
\theoremstyle{definition}
\newtheorem{rem}[thm]{Remark}
\newtheorem{ack}{Acknowledgments}   
\theoremstyle{remark}
\begin{document}

\bibliographystyle{amsalpha+}
\title{Successive vanishing on curves}
\maketitle

\dedicatory{Dedicated to the memory of Lucian B\u{a}descu on the occasion of his 80th birthday}

\begin{abstract} 
	We investigate the emptiness of adjoint linear systems associated to successive multiples of a given positive divisor with real coefficients.
\end{abstract}



\footnotetext[1]{2020 Mathematics Subject Classification. Primary: 14E30. Secondary: 14C20.}

\setcounter{section}{-1}

\section{Introduction}


We consider in this note the following question:
consider triples $(X,L,N)$, where $X$ is a complex projective nonsingular algebraic variety, $L$ is a nef and big $\R$-divisor on $X$, 
$\Supp(\lceil L\rceil-L)$ is a normal crossings divisor on $X$, and  $N$ is a positive integer such that
$$
|\lceil K_X+iL\rceil|=\emptyset \text{ for all } 1\le i\le N.
$$
Given $N$ (sufficiently large), can we classify the pairs $(X,L)$ with this property? What can be said about $L$ as $N$ approaches $+\infty$?
For example, a necessary condition on curves is $N\deg(L)\le 1$.
Note that the question does not change if we replace $X$ by a higher 
model and $L$ by its pullback. Also, we may suppose $X$ admits no fibration such that the restriction of $L$ to the general fiber satisfies the same properties.

Here $\lceil L\rceil$ denotes the round up of an $\R$-divisor, defined componentwise. If $\lceil L\rceil-L=0$, then $N\le \dim X$ by Kawamata-Viehweg vanishing and the classical argument that a polynomial has no more roots than its degree. So the difficulty seems to be hidden in the fractional part $\lceil L\rceil-L$. If the coefficients of $\lceil L\rceil-L$ are rational with bounded denominators, then $N$ is again bounded above. 

Our motivation is to understand when $|iK_X|=\emptyset$ for all $1\le i\le N$, where $X$ is a complex projective nonsingular variety of general type. It is known that $N$ is bounded above only in terms of $\dim X$~\cite{HM06,Tak06}. A positive answer to the question might give explicit bounds. The relation with the question are the inclusions
$$
\Gamma(X_l,\lceil K_{X_l}+(i-1)\frac{M_l}{l}\rceil)\subseteq \Gamma(X,iK_X)
$$
where $X_l\to X$ is a sufficiently high resolution and $M_l$ is the mobile part on $X_l$ of the linear system $|lK_X|$. The inclusions are equalities if $l$ is divisible by $i$. 

 In this note, we solve the question in dimension one (Theorem~\ref{v2}),
and give an application when $L$ is a positive multiple of a log divisor (Theorem~\ref{nvl}). We also solve the similar problem when successive adjoint linear systems contain a given point in the base locus (Theorem~\ref{4.9}). Other variants are possible, such as failure of successive adjoint linear systems to be mobile, to generate a fixed order of jets at a given point, etc. 

Two observations come out from the curve case.
First, for $N>1$, only the highest two coefficients of $\lceil L\rceil-L$ matter. Second, the Farey set of given order appears naturally. In fact, effective non-vanishing properties for $\R$-divisors can be restated in terms of divisors with coefficients in the Farey set of a given order.
A similar successive failure argument was used by Shokurov~\cite[Example 5.2.1]{Sho93} to construct lc $n$-complements on curves, with $n\in \{1,2,3,4,6\}$. In his setup, with $L=-K-B$ and $i$ starting at $2$, only the highest four coefficients of $B$ matter,
and suffices to consider coefficients of $B$ only in the Farey set of order $5$. 

It is very likely that our question can be solved for surfaces (cf.~\cite{CCZ05}). 

\begin{ack} 
This work was finalized while visiting Yau Mathematical Sciences Center, Tsinghua University. I would like to thank Caucher Birkar for the kind invitation.
\end{ack}


\section{Estimates}


For a positive integer $N$, consider the Farey set of order $N$ defined by
$$
\cF_N=[0,1]\cap \cup_{i=1}^N \frac{1}{i}\Z.
$$

The following properties hold:
\begin{itemize}
\item $x\in \cF_N$ if and only if $1-x\in \cF_N$.
\item The finite set $\cF_N$ decomposes the interval $[0,1)$ into finitely many 
disjoint intervals $[x,x')$. For $\delta\in [0,1)$, the unique interval which contains $\delta$ is determined by the formulas
$x=\max_{1\le i\le N}\frac{\lfloor i\delta\rfloor}{i},x'=\min_{1\le i\le N}\frac{1+\lfloor i\delta\rfloor}{i}$. Denote $x'$ by $\delta^+_N$.
\end{itemize}

\begin{lem} Let $0\le x<x'\le 1$. Then $(x,x')\cap \cF_N=\emptyset$
if and only if $\lfloor ix\rfloor+\lfloor i(1-x')\rfloor=i-1$ for every $1\le i\le N$.
\end{lem}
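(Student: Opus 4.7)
The plan is to peel off the quantifier over $i$, reduce each resulting condition to a statement about integers in a scaled open interval, and then convert that statement to the stated floor identity by a standard manipulation.

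First I would observe that by definition
$$
(x,x')\cap \cF_N=\emptyset \iff (x,x')\cap \tfrac{1}{i}\Z=\emptyset \text{ for every } 1\le i\le N,
$$
and multiplying by $i$ the condition on the right is equivalent to
$$
(ix,ix')\cap \Z=\emptyset.
$$
So it suffices to fix $i$ and prove that this last condition is equivalent to $\lfloor ix\rfloor+\lfloor i(1-x')\rfloor=i-1$.

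For the integer-counting step, I would use the elementary formula that for real numbers $a<b$, the number of integers in the open interval $(a,b)$ equals $\lceil b\rceil -\lfloor a\rfloor -1$ (this formula has the advantage of covering all boundary cases uniformly, whether or not $a$ or $b$ is itself an integer). Therefore $(ix,ix')\cap \Z=\emptyset$ if and only if $\lceil ix'\rceil=\lfloor ix\rfloor +1$. To rewrite the ceiling in terms of the floor of $i(1-x')$, I would use $\lceil z\rceil=-\lfloor -z\rfloor$ applied to $z=ix'$, combined with $-ix'=-i+i(1-x')$ and the fact that $i$ is an integer, giving $\lceil ix'\rceil=i-\lfloor i(1-x')\rfloor$. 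Substituting this yields the equivalent condition $\lfloor ix\rfloor+\lfloor i(1-x')\rfloor=i-1$, as required.

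There is essentially no serious obstacle: the lemma is a piece of elementary arithmetic. The only thing to be careful about is boundary behaviour when $ix$ or $ix'$ is an integer, or when $x=0$ or $x'=1$; using the form $\lceil b\rceil-\lfloor a\rfloor-1$ for the integer count on $(a,b)$ avoids the need to split into cases, and one checks directly that the extreme cases $x=0$ (forcing $1/i\ge x'$) and $x'=1$ (forcing $(i-1)/i\le x$) are correctly captured by the stated equality.
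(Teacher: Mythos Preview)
Your proof is correct and follows essentially the same route as the paper: reduce to the per-$i$ condition $(ix,ix')\cap\Z=\emptyset$, then convert $\lceil ix'\rceil$ to $i-\lfloor i(1-x')\rfloor$. The only cosmetic difference is packaging: the paper first observes the universal inequality $\lfloor ix\rfloor+\lfloor i(1-x')\rfloor\le i-1$ and then shows emptiness is equivalent to $\ge i-1$, whereas you use the exact integer-count formula $\lceil b\rceil-\lfloor a\rfloor-1$ to land directly on equality.
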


\begin{proof}
We have 
$
\lfloor ix\rfloor+\lfloor i(1-x')\rfloor\le ix+i(1-x')<i. 
$
Therefore $\lfloor ix\rfloor+\lfloor i(1-x')\rfloor\le i-1$.

We have $(ix,ix')\cap \Z=\emptyset$ if and only if $\lfloor ix\rfloor+1\ge \lceil ix'\rceil$, that is 
$
\lfloor ix\rfloor+\lfloor i(1-x')\rfloor\ge i-1.
$
Therefore $(ix,ix')\cap \Z=\emptyset$ if and only if $\lfloor ix\rfloor+\lfloor i(1-x')\rfloor= i-1$.

Finally, $(x,x')\cap \cF_N=\emptyset$ if and only if $(ix,ix')\cap \Z=\emptyset$ for every $1\le i\le N$.
\end{proof}

\begin{lem}\label{ari} Let $x<x'$ be two consecutive elements of $\cF_N$. Then:
\begin{itemize}
\item[1)] $x'-x\le \frac{1}{N}$. 
\item[2)] If $x'-x\ge \frac{1}{N+1}$, then $x=0$ or $x'=1$.
\end{itemize}
\end{lem}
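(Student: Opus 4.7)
My plan is to identify $\cF_N$ with the classical Farey sequence of order $N$ and invoke its standard arithmetic properties. Writing an element of $\cF_N$ in lowest terms gives a fraction $p/q\in [0,1]$ with $1\le q\le N$, and conversely every such fraction lies in $\cF_N$. I will use two well-known facts: if $x=p/q<p'/q'=x'$ are two consecutive elements of $\cF_N$ written in lowest terms, then (a) the unimodularity relation $p'q-pq'=1$ holds, so $x'-x=\frac{1}{qq'}$; and (b) $q+q'\ge N+1$. Fact (b) follows from the observation that the mediant $\frac{p+p'}{q+q'}$ lies strictly between $x$ and $x'$ (an immediate consequence of (a)), so it cannot belong to $\cF_N$, forcing $q+q'>N$.

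For 1), I combine (a) and (b) to bound $qq'$ from below. If $q=1$, then (b) gives $q'\ge N$, so $qq'\ge N$; the case $q'=1$ is symmetric. Otherwise $q,q'\ge 2$, and the trivial inequality $(q-1)(q'-1)\ge 1$ yields $qq'\ge q+q'\ge N+1$. In every case, $x'-x=1/(qq')\le 1/N$.

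For 2), suppose $x'-x\ge \frac{1}{N+1}$, equivalently $qq'\le N+1$. First I check that $q\ne q'$ unless $q=q'=1$: indeed, $q=q'$ in (a) gives $q(p'-p)=1$, so $q=1$. In the degenerate case $q=q'=1$ we have $\{x,x'\}=\{0,1\}$, and the conclusion is immediate. Otherwise $q\ne q'$; if additionally $q,q'\ge 2$, then $(q-1)(q'-1)\ge 2$, so $qq'\ge q+q'+1\ge N+2$, contradicting $qq'\le N+1$. Hence $\min(q,q')=1$: if $q=1$, then $x\in \{0,1\}$ and $x<x'$ forces $x=0$; symmetrically, $q'=1$ forces $x'=1$.

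The main subtlety is the contradiction step in 2), namely the sharper estimate $(q-1)(q'-1)\ge 2$ pushing $qq'$ up to $N+2$: this exact improvement over the bound used in part 1) is what rules out the borderline case $qq'=N+1$. It relies on consecutive reduced Farey fractions (outside the trivial case $\cF_1$) never sharing a denominator, itself a direct consequence of the unimodularity relation (a).
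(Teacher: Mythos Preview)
Your proof is correct and follows essentially the same approach as the paper: both invoke the unimodularity relation $p'q-pq'=1$ and the mediant inequality $q+q'>N$, then use the key estimate $(q-1)(q'-1)\ge 2$ when $\min(q,q')\ge 2$ and $q\ne q'$ to force $qq'\ge N+2$. The paper presents this as a single dichotomy (the two boundary intervals of length $\tfrac{1}{N}$ versus interior intervals of length at most $\tfrac{1}{N+2}$), whereas you treat parts 1) and 2) separately and spell out the edge cases more carefully, but the argument is the same.
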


\begin{proof} We have two cases. Either the $\cF_N$-interval is 
	$[0,\frac{1}{N})$ or $[\frac{N-1}{N},1)$, of length $\frac{1}{N}$.
	Or $x=\frac{p}{q},x'=\frac{p'}{q'}$, where $p,q,p',q'$ are positive integers such that $p'q-pq=1$, $\min(q,q')\ge 2$ and $\max(q,q')\le N<q+q'$. We have $(q-1)(q'-1)>1$, since it is not possible that both $q$ and $q'$ equal $2$. Therefore $qq'\ge q+q'+1\ge N+2$. Then
	$x'-x=\frac{1}{qq'}\le \frac{1}{N+2}$.
\end{proof}

\begin{lem}\label{nd}
Let $N\ge 2$, $0\le b<1$, $\frac{1}{2}\le \delta<\frac{N-1+b}{N}$.
Then there exist $1\le p<q\le N$ such that $\delta<\frac{p+b}{q}$ and $\frac{p}{q}-\delta<\frac{1}{N+1}$.
\end{lem}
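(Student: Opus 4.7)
The plan is to locate $\delta$ inside the Farey decomposition of $[0,1)$ and take $p/q$ to be the right endpoint $\delta^+_N$ of the Farey interval containing $\delta$, with a boundary modification when that endpoint equals $1$. Let $[x, x')$ be the Farey interval of $\cF_N$ containing $\delta$. Since $\delta \ge 1/2$ and $1/2 \in \cF_N$ for $N \ge 2$, one has $x \ge 1/2 > 0$.

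In the generic case $x' < 1$, I would write $\delta^+_N = x' = p/q$ in lowest terms. Then $1 \le p < q$, and the standard fact that a reduced fraction equal to some $k/i$ with $1 \le i \le N$ must have $q \mid i$ gives $q \le N$. The chain $\delta < x' = p/q \le (p+b)/q$ yields the first required inequality, while Lemma~\ref{ari}(2), applied to a Farey interval lying strictly inside $(0,1)$, gives $x' - x < 1/(N+1)$ and hence $p/q - \delta \le x' - x < 1/(N+1)$.

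In the remaining case $x' = 1$, the Farey interval must be $[(N-1)/N, 1)$, so $\delta \ge (N-1)/N$; combined with $\delta < (N-1+b)/N$ this forces $b > 0$. Here $\delta^+_N = 1$ is not usable (it would violate $p < q$), and I would instead choose $(p,q) = (N-1, N)$. Then $p/q - \delta \le 0 < 1/(N+1)$, and $\delta < (p+b)/q$ is precisely the hypothesis.

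The main technical step is Case~1's verification that the reduced denominator of $\delta^+_N$ is at most $N$, together with checking that the sharper estimate in Lemma~\ref{ari}(2) (rather than the cruder $1/N$ of part (1)) applies; both follow cleanly from $\delta \ge 1/2$ and the definition of $\cF_N$. Case~2 is essentially forced by the shape of the hypothesis, and is what motivates the asymmetric upper bound $(N-1+b)/N$ on $\delta$.
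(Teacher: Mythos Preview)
Your proof is correct and follows essentially the same approach as the paper's: both split according to whether $\delta\ge\frac{N-1}{N}$ (equivalently, whether the Farey interval containing $\delta$ has $x'=1$), handle the boundary case with $(p,q)=(N-1,N)$, and in the interior case take $p/q=\delta^+_N$ and invoke Lemma~\ref{ari}(2). Your write-up is slightly more explicit about why $x>0$, why the reduced denominator of $x'$ is at most $N$, and why $p<q$, points the paper leaves implicit.
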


\begin{proof}
Suppose $\delta\ge \frac{N-1}{N}$. Then we can take $p=N-1,q=N$.
Suppose $\delta<\frac{N-1}{N}$. Let $\delta\in [x,x')$ be the unique half open 
$\Z_N$-interval which contains it. We have $0<x<x'=\frac{p}{q}<1$. By 
Lemma~\ref{ari}.(2), $x'-x<\frac{1}{N+1}$. Then 
$\delta<\frac{p}{q}\le \frac{p+b}{q}$ and $\frac{p}{q}-\delta\le x'-x<\frac{1}{N+1}$.
\end{proof}

\begin{lem}\label{eac} Let $N\ge 1$ and $b\in [0,1)$.
\begin{itemize}
\item[1)] $\frac{N-1+b}{N}\le x<1$ if and only if $\lfloor ix-b\rfloor=i-1$ for all $1\le i\le N$.
\item[2)] If $N\ge 2$ and $\frac{N-2+b}{N-1}\le x<\frac{N-1+b}{N}$, 
then $\lfloor Nx-b\rfloor=N-2$.
\end{itemize}
\end{lem}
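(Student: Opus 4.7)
The plan is to handle both parts by direct verification of the defining floor inequalities. Given the elementary nature, the "proof" reduces to comparing $ix-b$ against the consecutive integers $i-1$ and $i$ using the given endpoint constraints on $x$.

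For Part (1), forward direction: I fix $1\le i\le N$ and separately verify $i-1\le ix-b<i$. The upper bound $ix-b<i$ is immediate from $x<1$ and $b\ge 0$. For the lower bound $ix-b\ge i-1$, equivalent to $x\ge \frac{i-1+b}{i}$, I observe that $\frac{i-1+b}{i}=1-\frac{1-b}{i}$ is a non-decreasing function of $i$ whenever $b<1$, so $\frac{i-1+b}{i}\le \frac{N-1+b}{N}\le x$. This monotonicity is the key structural point: among all the thresholds $\frac{i-1+b}{i}$ for $1\le i\le N$, the one at $i=N$ is the largest, which is exactly why it packages the whole collection of floor conditions into a single inequality. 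For the backward direction, the $i=N$ case of the hypothesis reads $N-1\le Nx-b<N$, yielding $\frac{N-1+b}{N}\le x$; the constraint $x<1$ is the ambient hypothesis on the fractional parameter $x$.

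For Part (2), I again compute upper and lower bounds for $Nx-b$ using the interval endpoints. From $x<\frac{N-1+b}{N}$ I get $Nx-b<N-1$, so $\lfloor Nx-b\rfloor\le N-2$. From $x\ge \frac{N-2+b}{N-1}$ I compute
\[
Nx-b\ge \frac{N(N-2+b)-b(N-1)}{N-1}=\frac{N(N-2)+b}{N-1},
\]
and a brief manipulation reduces $\frac{N(N-2)+b}{N-1}\ge N-2$ to $N+b\ge 2$, which holds since $N\ge 2$ and $b\ge 0$. Combining gives $\lfloor Nx-b\rfloor=N-2$.

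I do not foresee any genuine obstacle; both parts are one-line calculations once the correct endpoints are substituted. The only mild subtlety worth noting is the monotonicity observation in Part (1), which explains \emph{why} the single threshold $\frac{N-1+b}{N}$ captures the entire chain of floor conditions rather than just the $i=N$ one.
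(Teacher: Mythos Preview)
Your proof is correct and follows essentially the same route as the paper's: both arguments verify the floor identities by sandwiching $ix-b$ (respectively $Nx-b$) between consecutive integers via direct substitution of the endpoint constraints. The only cosmetic difference is that for the lower bound in Part~(1) you invoke the monotonicity of $i\mapsto 1-\frac{1-b}{i}$, whereas the paper substitutes directly and writes the bound as $i-\frac{i+(N-i)b}{N}>i-1$; likewise in Part~(2) the paper expresses the lower bound as $N-1-\frac{1-b}{N-1}$ rather than reducing to $N+b\ge 2$, but these are the same computation.
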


\begin{proof} 1) The implication $\Longleftarrow$ is clear. Consider the converse.
Let $1\le i\le N$. We have 
$
i-\frac{i+(N-i)b}{N}\le ix-b<i-b.
$ 
Since $b<1$, $i+(N-i)b<i$. 
Therefore $i-1<ix-b<i$. Therefore $\lfloor ix-b\rfloor=i-1$.

2) We have $N-1-\frac{1-b}{N-1}\le Nx-b<N-1$. Therefore $\lfloor Nx-b\rfloor=N-2$.
\end{proof}

\begin{prop}\label{crt}
Let $0\le b\le \delta<1$, $0\le b'\le \delta'<1$, $\delta'\le \delta$ and $\delta+\delta'\le 1$. 
Let $N\ge 2$. Then 
$$
\lfloor i\delta-b\rfloor+\lfloor i\delta'-b'\rfloor \ge i-1 \text{ for all }1\le i\le N
$$
if and only if one of the following holds:
\begin{itemize}
\item[a)] $\delta\ge \frac{N-1+b}{N}$, or
\item[b)] $\frac{1+b}{2}\le \delta<\frac{N-1+b}{N}$ and 
$\delta'\ge \max\{\frac{q-p+b'}{q};1\le p<q\le N,\delta<\frac{p+b}{q}\}$.
\end{itemize}
Moreover, $\delta\ge 1-\frac{1}{N}$ in case a). In case b), $\delta+\delta'>1-\frac{1}{N+1}$ and 
$\delta'\ge \frac{1}{N}$ (in particular, $\delta+2\delta'>1$).
And $\delta'=\delta$ if and only if $\delta'=\delta=\frac{1}{2},b=0$.
\end{prop}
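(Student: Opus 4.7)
The plan is to prove the equivalence in two directions and then extract the ``moreover'' bounds.

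\textbf{Sufficiency.} If (a) holds, Lemma \ref{eac}(1) gives $\lfloor i\delta-b\rfloor=i-1$ for every $1\le i\le N$, and since $\delta'\ge b'$ forces $\lfloor i\delta'-b'\rfloor\ge 0$, the sum is $\ge i-1$. If (b) holds, fix $i$ and set $p:=\lfloor i\delta-b\rfloor+1\le i$. When $p=i$ the first floor already equals $i-1$. When $p<i$ we have $\delta<(p+b)/i$ by definition of $p$, so the pair $(p,i)$ enters the max in (b); therefore $\delta'\ge(i-p+b')/i$, whence $\lfloor i\delta'-b'\rfloor\ge i-p$ and the two floors sum to at least $(p-1)+(i-p)=i-1$.

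\textbf{Necessity.} Assume the floor inequalities hold and that (a) fails, i.e.\ $\delta<(N-1+b)/N$. For the lower bound on $\delta$, apply the inequality at $i=2$: the constraints $\delta'\le\delta$ and $\delta+\delta'\le 1$ give $\delta'\le 1/2$, hence $\lfloor 2\delta'-b'\rfloor\le 0$, so $\lfloor 2\delta-b\rfloor\ge 1$, i.e.\ $\delta\ge(1+b)/2$. For the max bound, fix any $1\le p<q\le N$ with $\delta<(p+b)/q$; then $q\delta-b<p$ forces $\lfloor q\delta-b\rfloor\le p-1$, so applying the inequality at $i=q$ yields $\lfloor q\delta'-b'\rfloor\ge q-p$, i.e.\ $\delta'\ge(q-p+b')/q$. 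Taking the maximum gives (b).

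\textbf{The ``moreover'' clauses.} In case (a), $\delta\ge(N-1+b)/N\ge 1-1/N$ is immediate. In case (b), I would apply Lemma \ref{nd} with $\delta\ge(1+b)/2\ge 1/2$ and $\delta<(N-1+b)/N$ to obtain $1\le p<q\le N$ with $\delta<(p+b)/q$ and $p/q-\delta<1/(N+1)$. The max bound then yields $\delta'\ge(q-p)/q=1-p/q>1-\delta-1/(N+1)$, giving $\delta+\delta'>1-1/(N+1)$, and simultaneously $\delta'\ge(q-p)/q\ge 1/q\ge 1/N$. For the equality clause, if $\delta'=\delta$ then $\delta+\delta'\le 1$ forces $\delta\le 1/2$; in case (b) this combines with $\delta\ge(1+b)/2\ge 1/2$ to give $\delta=1/2$ and $b=0$, while in case (a) it similarly forces $N=2$, $\delta=1/2$, $b=0$.

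The main obstacle is the $i=2$ step of the necessity direction: the clean argument uses $\lfloor 2\delta'-b'\rfloor\le 0$, which holds on the strength of $\delta'\le 1/2$; I expect the sharp boundary $\delta'=1/2$, $b'=0$ to require a brief separate inspection, and this boundary is precisely what explains why the threshold $(1+b)/2$ appears in (b). The remainder of the argument is then a clean case-distinction on $p=\lfloor i\delta-b\rfloor+1$ together with Lemma \ref{nd} for the numerical consequences.
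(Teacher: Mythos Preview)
Your proof follows essentially the same route as the paper's---sufficiency via Lemma~\ref{eac} and the case split on $p=\lfloor i\delta-b\rfloor+1$, necessity by extracting $\delta\ge(1+b)/2$ from $i=2$ and then the general $(p,q)$ bound, and the ``moreover'' clauses from Lemma~\ref{nd}---the only organizational difference being that the paper isolates $N=2$ as a separate base case (where (b) is vacuous) before treating $N\ge 3$. The boundary $\delta'=1/2$, $b'=0$ you flag is precisely what the paper's $N=2$ analysis addresses: there $2\delta'-b'\ge 1$ forces $\delta=\delta'=1/2$ and $b'=0$, which the paper places in case (a).
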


\begin{proof}
Let $N=2$. The system of inequalities becomes $\lfloor 2\delta-b\rfloor+\lfloor 2\delta'-b'\rfloor \ge 1$.
That is $2\delta-b\ge 1$ or $2\delta'-b'\ge 1$. If $2\delta-b\ge 1$, we are in case a). If 
$2\delta'-b'\ge 1$, then $\delta'\ge \frac{1}{2}$. Then $\delta\ge \frac{1}{2}$. Then 
$\delta=\delta'=\frac{1}{2}$ and $b=b'=0$. We are in case a).

Let $N\ge 3$. Suppose $\delta\ge \frac{N-1+b}{N}$. By Lemma~\ref{eac}.1), 
$\lfloor i\delta-b\rfloor= i-1$ for all $1\le i\le N$. The system of inequalities is satisfied.
Suppose $\delta<\frac{N-1+b}{N}$. From the case $N=2$, we obtain 
$$
\frac{1+b}{2}\le \delta<\frac{N-1+b}{N}.
$$
Suppose the system of inequalities if satisfied. Let $1\le p<q\le N$ with $\delta<\frac{p+b}{q}$.
Then $p>q\delta-b$, that is $p-1\ge \lfloor q\delta-b\rfloor$. The inequality for $i=q$ gives
$p-1+\lfloor q\delta'-b'\rfloor \ge q-1$. Therefore $\lfloor q\delta'-b'\rfloor\ge q-p$. That is 
$q\delta'-b'\ge q-p$. Therefore $\delta'\ge \frac{q-p+b'}{q}$. So b) holds.

Conversely, suppose b) holds. Let $1\le i\le N$. Let $p=1+\lfloor i\delta-b\rfloor$.
If $p=i-1$, then $\lfloor i\delta-b\rfloor+\lfloor i\delta'-b'\rfloor \ge \lfloor i\delta-b\rfloor= i-1$.
If $p<i$, then $1\le p<i\le N$ and $\delta<\frac{p+b}{i}$. By b) for $q=i$, we deduce
$\delta'\ge \frac{i-p+b'}{i}$. Then $\lfloor i\delta'-b'\rfloor\ge i-p$. Therefore 
$\lfloor i\delta-b\rfloor+\lfloor i\delta'-b'\rfloor \ge p-1+i-p=i-1$. We conclude that the system
of inequalities holds is equivalent to a) or b).

In case a), $\delta\ge \frac{N-1+b}{N}\ge 1-\frac{1}{N}$. Consider case b). 
We have $N\ge 3$. By Lemma~\ref{nd}, there exists $1\le p<q\le N$ such that 
$\delta<\frac{p+b}{q}$ and $\frac{p}{q}-\delta<\frac{1}{N+1}$. By b),
$
\delta'\ge \frac{q-p+b'}{q}.
$
Therefore 
$$
\delta+\delta'\ge \delta+\frac{q-p}{q}>\frac{p}{q}-\frac{1}{N+1}+\frac{q-p}{q}= 1-\frac{1}{N+1}.
$$
From $\delta<\frac{N-1+b}{N}$ we deduce $\delta'\ge \frac{1+b'}{N}$. In particular
$\delta'\ge \frac{1}{N}$. In particular, $\delta+2\delta'>1+\frac{1}{N(N+1)}$.

Suppose $\delta'=\delta$. Since $\delta\ge \frac{1+b}{2}$ and $\delta+\delta'\le 1$, we
deduce $\delta=\delta'=\frac{1}{2}$ and $b=0$. 
\end{proof}

\begin{rem} Let $b\in [0,1)$, let $N\ge 1$.
Consider the totally ordered finite set $[0,1)\cap \{\frac{p+b}{q};1\le q\le N,p\in \N\}$.
The maximal element is $\frac{N-1+b}{N}$. If $N=1$, this is the only element. If 
$N\ge 2$, the next largest element is $\frac{N-2+b}{N-1}$.
\end{rem}

\begin{cor}\label{crtNoB}
Let $0\le \delta'\le \delta<1,\delta+\delta'\le 1$, let $N\ge 2$. Then 
$
\lfloor i\delta\rfloor+\lfloor i\delta'\rfloor \ge i-1 \text{ for all }1\le i\le N
$
if and only if $\delta^+_N+\delta'\ge 1$.
\end{cor}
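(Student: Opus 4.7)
The plan is to specialize Proposition~\ref{crt} to $b=b'=0$ and translate the statement via the identity
\[
\delta^+_N=\min\left\{\tfrac{p}{q}:1\le q\le N,\ p\in\N,\ \tfrac{p}{q}>\delta\right\},
\]
which is immediate from the defining formula $\delta^+_N=\min_{1\le i\le N}\frac{1+\lfloor i\delta\rfloor}{i}$, since $(1+\lfloor i\delta\rfloor)/i$ is the smallest fraction with denominator $i$ that exceeds $\delta$. Under this reading, case~(a) of Proposition~\ref{crt} forces $\delta^+_N=1$ by Lemma~\ref{eac}.(1), so $\delta^+_N+\delta'\ge 1$ is automatic. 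In case~(b), the inequality $\delta<\frac{N-1}{N}<1$ guarantees that the minimum defining $\delta^+_N$ is attained at some fraction $p/q$ with $p<q$ (because $\frac{N-1}{N}$ is already a candidate), so the threshold $\max\{(q-p)/q;\,1\le p<q\le N,\ \delta<p/q\}$ appearing there coincides with $1-\delta^+_N$, and case~(b)'s bound on $\delta'$ becomes $\delta^+_N+\delta'\ge 1$.

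For the converse direction I would check that $\delta^+_N+\delta'\ge 1$ places us in case~(a) or~(b) of Proposition~\ref{crt}. If $\delta\ge\frac{N-1}{N}$ then~(a) holds outright. Otherwise a preliminary step is to deduce $\delta\ge\tfrac12$: if instead $\delta<\tfrac12$, then the candidate $i=2$ in the minimum gives $\delta^+_N\le\tfrac12$, and combined with $\delta'\le\delta<\tfrac12$ this yields $\delta^+_N+\delta'<1$, contradicting the hypothesis. Granted $\delta\ge\tfrac12$ we land in the range of~(b), whose threshold condition again matches $\delta^+_N+\delta'\ge 1$ by the dictionary from the first paragraph.

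The only step that requires any real thought is this dictionary between $\delta^+_N$ and the max-of-fractions expression in Proposition~\ref{crt}(b); once the minimum defining $\delta^+_N$ is shown to be attained at some $p/q<1$ under the hypothesis $\delta<\frac{N-1}{N}$, the corollary is essentially a rewriting of the proposition, with the extra bookkeeping of ruling out $\delta<\tfrac12$ being the only wrinkle in the reverse implication.
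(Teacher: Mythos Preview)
Your proposal is correct and follows exactly the route the paper intends: the corollary is stated immediately after Proposition~\ref{crt} with no separate proof, so one is meant to specialize to $b=b'=0$ and rewrite the dichotomy (a)/(b) in terms of $\delta^+_N$, which is precisely what you do. Your dictionary $\max\{\tfrac{q-p}{q}:1\le p<q\le N,\ \delta<\tfrac{p}{q}\}=1-\delta^+_N$ (valid once $\delta<\tfrac{N-1}{N}$ forces the minimizing fraction below $1$) and the observation that $\delta<\tfrac12$ would give $\delta^+_N\le\tfrac12$ and hence $\delta^+_N+\delta'<1$ are exactly the two small checks needed to make the specialization go through.
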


\begin{prop}\label{crtdiv} Let $0\le B\le \Delta$ be $\R$-divisors on a nonsingular curve such that
$\lfloor \Delta\rfloor=0$ and $\deg\Delta\le 1$. 
Let $P$ be a point where $\Delta$ attains its maximal multiplicity,
let $P'$ be a point where $\Delta'=\Delta-\delta_P P$ attains its maximal
multiplicity. Denote $\Delta''=\Delta-\delta_P P-\delta_{P'} P'$. Let $N\ge 2$. 

Then 
$
\deg \lfloor i\Delta-B\rfloor\ge i-1 \text{ for all }1\le i\le N
$ 
if and only if $\lfloor N\Delta''\rfloor=0$ and 
$\lfloor i\delta_P-b_P\rfloor+\lfloor i\delta_{P'}-b_{P'}\rfloor\ge i-1$ for all $1\le i\le N$.
\end{prop}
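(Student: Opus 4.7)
My plan is to handle the two implications of the iff separately; the backward direction is a bookkeeping reduction, and the forward one splits into proving $\lfloor N\Delta''\rfloor=0$, the rest following automatically. For the backward direction, the hypothesis $\lfloor N\Delta''\rfloor=0$ says that every $Q\ne P,P'$ has $\delta_Q<1/N$, so for $1\le i\le N$ one has $0\le i\delta_Q-b_Q<i/N\le 1$ and hence $\lfloor i\delta_Q-b_Q\rfloor=0$; thus $\deg\lfloor i\Delta-B\rfloor$ collapses to $\lfloor i\delta_P-b_P\rfloor+\lfloor i\delta_{P'}-b_{P'}\rfloor$, which is $\ge i-1$ by assumption. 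The same collapse shows that in the forward direction, once $\lfloor N\Delta''\rfloor=0$ is established, the two-point inequalities will coincide with the given global ones, completing the proof.

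To prove $\lfloor N\Delta''\rfloor=0$, I will argue by contradiction and apply Proposition~\ref{crt} twice. Assume some $Q_0\ne P,P'$ has $\delta_{Q_0}\ge 1/N$. The $i=2$ global instance and maximality of $\delta_P$ force $\delta_P\ge 1/2$, maximality of $P'$ gives $\delta_{P'}\ge\delta_{Q_0}\ge 1/N$, and $\deg\Delta\le 1$ yields $\delta_P+\delta_{P'}\le 1-1/N$. The first application of Proposition~\ref{crt} to $(\delta_P,\delta_{P'},b_P,b_{P'})$ with the original $N$ rules out both of its cases: case~(a) requires $\delta_P\ge(N-1)/N$, which with $\delta_{P'},\delta_{Q_0}\ge 1/N$ would give $\delta_P+\delta_{P'}+\delta_{Q_0}\ge 1+1/N>1$, absurd; case~(b) requires $\delta_P+\delta_{P'}>1-1/(N+1)$, contradicting $\le 1-1/N$. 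Hence the two-point inequalities must fail at some smallest index $q^*\in[2,N]$. A short check rules out $q^*=2$: the defect there would produce some $Q\ne P,P'$ with $\delta_Q\ge 1/2$, which combined with $\delta_P,\delta_{P'}\ge 1/2$ would give $\delta_P+\delta_{P'}+\delta_Q\ge 3/2>1$; so $q^*\ge 3$.

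The main obstacle, and the key idea, is the second application of Proposition~\ref{crt} at the transition level $N^*:=q^*-1\ge 2$: since the two-point inequalities do hold for $1\le i\le N^*$, the pair $(\delta_P,\delta_{P'},b_P,b_{P'})$ must fall in case~(a) or~(b) for $N^*$. In case~(a), $\delta_P\ge(N^*-1)/N^*$ yields $\delta_{P'}+\delta_{Q_0}\le 1/N^*$ and hence $\delta_{Q_0}\le 1/(2N^*)$; since $\delta_Q\le\delta_{Q_0}$ for every $Q\ne P,P'$, one gets $q^*\delta_Q\le q^*/(2(q^*-1))<1$ and $\lfloor q^*\delta_Q-b_Q\rfloor=0$. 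In case~(b), $\delta_P+\delta_{P'}>1-1/q^*$ yields $\deg\Delta''<1/q^*$, so $\sum_{Q\ne P,P'}\lfloor q^*\delta_Q-b_Q\rfloor\le q^*\deg\Delta''<1$. Either way this non-negative integer sum vanishes, so $\deg\lfloor q^*\Delta-B\rfloor$ equals the two-point sum at $i=q^*$, which is $\le q^*-2$, contradicting the hypothesis $\deg\lfloor q^*\Delta-B\rfloor\ge q^*-1$.
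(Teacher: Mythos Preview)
Your argument is correct. One small point of hygiene: when you later assert ``$\delta_Q\le\delta_{Q_0}$ for every $Q\ne P,P'$'', you are tacitly taking $Q_0$ to be a point of maximal multiplicity in $\Delta''$, which you did not say when introducing it. This is harmless (choose $Q_0$ that way), and in fact in your case~(a) at level $N^*$ you can bypass $Q_0$ entirely: for any $Q\ne P,P'$ one has $\delta_{P'}+\delta_Q\le \deg\Delta-\delta_P\le 1/N^*$ and $\delta_Q\le\delta_{P'}$, hence $\delta_Q\le 1/(2N^*)$ directly.

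The reduction of both directions to the single claim $\lfloor N\Delta''\rfloor=0$ is exactly how the paper proceeds. Where you diverge is in establishing that claim. The paper runs an induction on $N$: the base $N=2$ is a one-line degree count, and for $N>2$ it splits into three ranges for $\delta_P$ (namely $\delta_P\ge\frac{N-1+b_P}{N}$; $\frac{N-2+b_P}{N-1}\le\delta_P<\frac{N-1+b_P}{N}$; $\delta_P<\frac{N-2+b_P}{N-1}$), handling each by a tailored computation and invoking the inductive hypothesis only in the last range to feed case~(b) of Proposition~\ref{crt} at level $N-1$. Your route is a single contradiction: apply Proposition~\ref{crt} once at level $N$ to force a minimal failure index $q^*\ge 3$, then apply it again at level $q^*-1$ to squeeze $\deg\Delta''$ and localise the $q^*$-th inequality to $P,P'$. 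Your version is more uniform---no three-way case split, no induction---while the paper's version makes more visible which range of $\delta_P$ drives each step; both ultimately hinge on the same estimate $\delta_P+\delta_{P'}>1-\frac{1}{N^*+1}$ coming from case~(b) of Proposition~\ref{crt}.
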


\begin{proof} Suffices to show $\lfloor N\Delta''\rfloor=0$. We use induction on $N$.

Let $N=2$. Suppose by contradiction $\lfloor 2\Delta''\rfloor\ne 0$. Then 
$\Delta''$ has a coefficient $\delta''\ge \frac{1}{2}$. Then $\delta_P\ge \delta_{P'}\ge \delta''\ge 
\frac{1}{2}$. Then $\deg\Delta\ge \frac{3}{2}>1$, a contradiction. Therefore 
$\lfloor 2\Delta''\rfloor=0$.

Let $N>2$. Suppose $\delta_P\ge \frac{N-1+b_P}{N}$. 
In particular, $\deg \Delta'\le \frac{1}{N}$. If $\lfloor N\Delta'\rfloor\ne 0$, then 
$\delta_{P'}\ge \frac{1}{N}$. Therefore $\Delta=\frac{N-1}{N}P+\frac{1}{N}P'$ and $B\le \frac{1}{N}P'$.
Here $\Delta''=0$. If $\lfloor N\Delta'\rfloor=0$, then $\lfloor N\Delta''\rfloor=0$.

Suppose $\delta_P<\frac{N-1+b_P}{N}$.

Case $\frac{N-2+b_P}{N-1}\le \delta_P$. Then $\lfloor N\delta_P-b_P\rfloor=N-2$.
Denote $B'=B-b_PP$. Our system of inequalities becomes 
$\deg \lfloor N\Delta'-B'\rfloor\ge 1$. That is $\lfloor N\Delta'-B'\rfloor\ne 0$. That is 
$N\delta_Q-b_Q\ge 1$ at some point $Q\in \Supp \Delta'$. We have 
$$
\delta_P+\delta_{P'}\ge \delta_P+\delta_Q\ge \frac{N-2}{N-1}+\frac{1}{N}=1-\frac{1}{(N-1)N}.
$$
From $(N-1)N>N$, we deduce $\delta_P+\delta_{P'}>1-\frac{1}{N}$.
Therefore $\deg\Delta''<\frac{1}{N}$. Therefore $\lfloor N\Delta''\rfloor=0$.

Case $\delta_P<\frac{N-2+b_P}{N-1}$. By induction, 
$\lfloor i\delta_P-b_P\rfloor+\lfloor i\delta_{P'}-b_{P'}\rfloor\ge i-1$ for all $1\le i\le N-1$.
We are in case $b)_{N-1}$ of Proposition~\ref{crt}. Therefore $\delta_P+\delta_{P'}>1-\frac{1}{(N-1)+1}$. 
Therefore $\deg\Delta''<\frac{1}{N}$. Therefore $\lfloor N\Delta''\rfloor=0$.
\end{proof}

\begin{cor} 
Suppose $\Delta\ge 0$, $\lfloor \Delta\rfloor=0$ and $\deg\Delta\le 1$. 
Let $P$ be a point where $\Delta$ attains its maximal multiplicity $\delta$.
Let $P'$ be a point where $\Delta'=\Delta-\delta P$ attains its maximal
multiplicity $\delta'$. Denote $\Delta''=\Delta-\delta P-\delta' P'$. Let $N\ge 2$. 

Then 
$
\deg \lfloor i\Delta\rfloor\ge i-1 \text{ for all }1\le i\le N
$ 
if and only if $\lfloor N\Delta''\rfloor=0$ and $\delta^+_N+\delta'\ge 1$. 
\end{cor}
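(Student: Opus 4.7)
The plan is to deduce this corollary directly from the two preceding results, namely Proposition~\ref{crtdiv} and Corollary~\ref{crtNoB}, by specializing $B=0$.

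First I would apply Proposition~\ref{crtdiv} with $B=0$. Then $b_P=b_{P'}=0$, the hypotheses $0\le B\le \Delta$, $\lfloor\Delta\rfloor=0$, $\deg\Delta\le 1$ are all immediate, and the proposition says that
$$
\deg\lfloor i\Delta\rfloor\ge i-1 \text{ for all } 1\le i\le N
$$
is equivalent to the two conditions $\lfloor N\Delta''\rfloor=0$ and $\lfloor i\delta\rfloor+\lfloor i\delta'\rfloor\ge i-1$ for all $1\le i\le N$.

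Next I would rewrite the second of these conditions using Corollary~\ref{crtNoB}. To invoke it I need to verify its hypotheses: $\delta'\le \delta$ holds by the choice of $P$ and $P'$; $\delta<1$ follows from $\lfloor\Delta\rfloor=0$; and $\delta+\delta'\le \deg\Delta\le 1$. All three hold, so Corollary~\ref{crtNoB} converts $\lfloor i\delta\rfloor+\lfloor i\delta'\rfloor\ge i-1$ for all $1\le i\le N$ into the single inequality $\delta_N^++\delta'\ge 1$.

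Combining the two reductions yields exactly the stated equivalence, so no genuine obstacle arises --- the corollary is a purely formal consequence once the proposition and the $B=0$ corollary are in place. The only thing worth stating explicitly in the write-up is the verification of the hypothesis $\delta+\delta'\le 1$ of Corollary~\ref{crtNoB}, which uses the global bound $\deg\Delta\le 1$.
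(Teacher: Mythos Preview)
Your proposal is correct and is exactly the intended argument: the paper states this corollary immediately after Proposition~\ref{crtdiv} without proof, and it is precisely the specialization $B=0$ of that proposition combined with Corollary~\ref{crtNoB}. Your check that $\delta+\delta'\le\deg\Delta\le 1$ is the only point worth making explicit, and you have done so.
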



\subsection{Case $N=+\infty$}


Let $\Delta$ be an effective $\R$-divisor on a nonsingular curve. Let
$\delta$ be the higher multiplicity of $\Delta$, attained at $P$ say.
Let $\delta'$ be the highest multiplicity of $\Delta'=\Delta-\delta P$. Note $\delta\ge \delta'$.

\begin{lem}\label{1.11} Suppose $\deg\Delta\le 1$. Then 
$\deg\lfloor i\Delta\rfloor\ge i-1$ for all $i\ge 1$ if and only if 
$\Delta=P$ or $\Delta=\delta P+(1-\delta)P'$ for some $\delta\in [\frac{1}{2},1)$.
\end{lem}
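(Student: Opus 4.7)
The plan is to bootstrap from the finite-$N$ corollary immediately preceding this lemma by letting $N \to +\infty$, after first disposing of the degenerate case where $\Delta$ has an integer component.

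First I would handle the case $\delta \ge 1$. Since $\Delta$ is effective with $\deg\Delta \le 1$ and $\Delta \ge \delta P \ge P$, we are forced to have $\delta = 1$ and $\Delta = P$. Then $\deg\lfloor iP\rfloor = i \ge i-1$ for all $i$, so the condition holds; this accounts for the first bullet of the conclusion.

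Assume now $\delta < 1$, so that $\lfloor \Delta\rfloor = 0$ and the preceding corollary applies for every $N \ge 2$. The hypothesis gives, for all such $N$, both $\lfloor N\Delta''\rfloor = 0$ and $\delta^+_N + \delta' \ge 1$. If $\Delta''$ had any positive coefficient $c$, then $\lfloor Nc\rfloor \ge 1$ as soon as $N \ge 1/c$, contradicting the first condition; hence $\Delta'' = 0$, i.e.\ $\Delta = \delta P + \delta' P'$ is supported on at most two points. For the second condition, Lemma~\ref{ari}(1) gives $0 < \delta^+_N - \delta \le 1/N$, so passing to the limit yields $\delta + \delta' \ge 1$. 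Combined with $\deg\Delta = \delta + \delta' \le 1$, this forces $\delta' = 1 - \delta$, and the inequality $\delta \ge \delta'$ then forces $\delta \ge 1/2$, placing us in the second bullet.

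For the converse, both configurations are immediate. For $\Delta = P$ there is nothing to verify. For $\Delta = \delta P + (1-\delta) P'$, the two real numbers $i\delta$ and $i(1-\delta)$ sum to the integer $i$, so $\lfloor i\delta \rfloor + \lfloor i(1-\delta)\rfloor$ equals either $i-1$ or $i$, always at least $i-1$. There is no genuine obstacle here beyond correctly invoking the already-established finite-$N$ criterion; the only analytic input is the convergence $\delta^+_N \to \delta$, which is a one-line consequence of Lemma~\ref{ari}.
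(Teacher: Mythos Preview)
Your proof is correct and follows essentially the same approach as the paper: dispose of the case $\lfloor\Delta\rfloor\ne 0$ first, then invoke the finite-$N$ criterion and let $N\to\infty$ to force $\delta+\delta'\ge 1$, hence $\Delta=\delta P+(1-\delta)P'$, and check the converse via $\lfloor i\delta\rfloor+\lfloor i(1-\delta)\rfloor\ge i-1$. The only cosmetic difference is that you derive $\Delta''=0$ from the condition $\lfloor N\Delta''\rfloor=0$ in the preceding Corollary, whereas the paper obtains it as a consequence of $\delta+\delta'\ge 1$ together with $\deg\Delta\le 1$.
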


\begin{proof} Let $\lfloor \Delta\rfloor\ne 0$. That is $\Delta\ge P$ for some $P$.
That is $\Delta=P$. The inequalities are satisfied. Let $\lfloor \Delta\rfloor=0$. 
From above, the highest two coefficients of $\Delta$ satisfy
$\delta+\delta'\ge 1-\frac{1}{N}$, with $N\to\infty$. Therefore $\delta+\delta'\ge 1$.
From $\deg\Delta\le 1$, we deduce $\delta'=1-\delta$ and $\Delta=\delta P+(1-\delta)P'$.
Here $\lfloor i\delta\rfloor+\lfloor i-i\delta\rfloor=i-(\lceil i\delta\rceil-\lfloor i\delta\rfloor )\ge i-1$.
\end{proof}

\begin{cor}  $\deg\lfloor i\Delta\rfloor=i-1$ for all $i\ge 1$ 
if and only if $\Delta=\delta P+(1-\delta)P'$ for some $\delta\in [\frac{1}{2},1]\setminus \Q$.
\end{cor}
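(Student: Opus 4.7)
The plan is to deduce this corollary from Lemma~\ref{1.11} by tightening the inequality $\deg\lfloor i\Delta\rfloor\ge i-1$ to an equality. I would proceed in three steps: first extract the hypotheses of Lemma~\ref{1.11} from the equality assumption, then apply that lemma to pin down the shape of $\Delta$, and finally determine which values of the leading coefficient yield equality for every $i\ge 1$.

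For the first step, setting $i=1$ immediately gives $\lfloor\Delta\rfloor=0$, so every coefficient of $\Delta$ lies in $[0,1)$. To see $\deg\Delta\le 1$, which is needed in order to apply Lemma~\ref{1.11}, I would use the identity $i\deg\Delta-(i-1)=\deg(i\Delta-\lfloor i\Delta\rfloor)=\sum_P(i\delta_P-\lfloor i\delta_P\rfloor)$. Each summand lies in $[0,1)$, so the right-hand side is bounded above by $|\Supp\Delta|$ uniformly in $i$, and letting $i\to\infty$ forces $\deg\Delta\le 1$.

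With both hypotheses in place, Lemma~\ref{1.11} applied to the weaker inequality $\ge i-1$ forces $\Delta=\delta P+(1-\delta)P'$ for some $\delta\in[\frac{1}{2},1)$; the alternative $\Delta=P$ from that lemma is excluded by $\lfloor\Delta\rfloor=0$. For such $\Delta$, the computation already appearing in the proof of Lemma~\ref{1.11} gives $\deg\lfloor i\Delta\rfloor=i-(\lceil i\delta\rceil-\lfloor i\delta\rfloor)$, which equals $i-1$ precisely when $i\delta\notin\Z$. Requiring this for every $i\ge 1$ is equivalent to $\delta$ being irrational, and since $1\in\Q$ the admissible range becomes $\delta\in[\frac{1}{2},1]\setminus\Q$; the converse is read off from the same identity. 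The only step that demands genuine work is the derivation of $\deg\Delta\le 1$ from the equality assumption without assuming it a priori; once that is in place, the corollary is a direct consequence of Lemma~\ref{1.11} together with the fractional-part computation.
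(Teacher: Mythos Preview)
Your argument is correct and is precisely the deduction the paper has in mind: the corollary is stated without proof immediately after Lemma~\ref{1.11}, and your route---apply that lemma to get $\Delta=\delta P+(1-\delta)P'$, then use the identity $\deg\lfloor i\Delta\rfloor=i-(\lceil i\delta\rceil-\lfloor i\delta\rfloor)$ from its proof to force $\delta\notin\Q$---is exactly the intended one. Your additional step deriving $\deg\Delta\le 1$ from the equality hypothesis via $i\deg\Delta-(i-1)=\sum_P\{i\delta_P\}<|\Supp\Delta|$ is a valid strengthening; the paper most likely intends $\deg\Delta\le 1$ as a standing assumption inherited from Lemma~\ref{1.11}, but your argument shows it is in fact a consequence, so nothing is lost.
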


\begin{lem}\label{1.12} 
$\deg\Delta\le 1$, $\deg\lfloor i\Delta\rfloor\ge i-1 \ (1\le i\le N)$ if and only if $\delta^+_N+\delta'\ge 1$.
\end{lem}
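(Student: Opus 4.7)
The plan is to bootstrap from the Corollary immediately preceding Lemma~\ref{1.12} (the one following Proposition~\ref{crtdiv}), which, under the hypotheses $\Delta\ge 0$, $\lfloor\Delta\rfloor=0$, $\deg\Delta\le 1$, asserts
$$
\deg\lfloor i\Delta\rfloor\ge i-1 \text{ for all }1\le i\le N\iff \lfloor N\Delta''\rfloor=0 \text{ and }\delta_N^++\delta'\ge 1.
$$
The content of Lemma~\ref{1.12} is that, with $\deg\Delta\le 1$ in force, the side condition $\lfloor N\Delta''\rfloor=0$ is redundant. The edge case $\lfloor\Delta\rfloor\ne 0$ is trivial: being effective of degree $\le 1$ forces $\Delta=P$, so both sides of the equivalence hold (extending $\delta_N^+=1$ at $\delta=1$).

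Assume henceforth $\lfloor\Delta\rfloor=0$. The forward implication will follow at once from the cited Corollary. For the converse, I will assume $\delta_N^++\delta'\ge 1$ together with $\deg\Delta\le 1$ and verify $\lfloor N\Delta''\rfloor=0$. By Lemma~\ref{ari}.(1), $\delta_N^+\le \delta+\frac{1}{N}$, so $\delta+\delta'\ge 1-\frac{1}{N}$ and hence $\deg\Delta''\le \frac{1}{N}$. Every coefficient of $\Delta''$ is bounded above by $\delta'$ (since $\delta'$ is the maximum of $\Delta'$ and $\Delta''$ is supported off $P$) and also by $\deg\Delta''$ (by effectivity), hence is at most $\min(\delta',\frac{1}{N})\le \frac{1}{N}$.

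The main obstacle is ruling out a coefficient of $\Delta''$ exactly equal to $\frac{1}{N}$, which would make $\lfloor N\Delta''\rfloor\ne 0$. If such a coefficient occurred at a point $Q$, then tracing the inequalities above back forces $\deg\Delta''=\frac{1}{N}$ (with $\Delta''$ concentrated at $Q$), $\deg\Delta=1$, $\delta+\delta'=1-\frac{1}{N}$, and the tight equality $\delta_N^+-\delta=\frac{1}{N}$. By Lemma~\ref{ari}.(2) this pins $\delta$ to one of the extremal $\cF_N$-intervals $[0,\frac{1}{N})$ or $[\frac{N-1}{N},1)$. In the first, $\delta'\le\delta<\frac{1}{N}$ contradicts the offending coefficient $\frac{1}{N}$ being at most $\delta'$. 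In the second, $\delta\ge \frac{N-1}{N}$ combined with $\delta+\delta'=\frac{N-1}{N}$ forces $\delta'=0$, again contradicting that bound. Thus $\lfloor N\Delta''\rfloor=0$, and the preceding Corollary produces the desired inequalities.
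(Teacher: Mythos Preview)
The paper states Lemma~\ref{1.12} without proof, presumably regarding it as an immediate consequence of the Corollary following Proposition~\ref{crtdiv}. Your argument is correct and supplies precisely the missing step: that under $\deg\Delta\le 1$, the side condition $\lfloor N\Delta''\rfloor=0$ in that Corollary is already forced by $\delta_N^++\delta'\ge 1$. The chain of tight equalities and the use of both parts of Lemma~\ref{ari} to pin $\delta$ to an extremal $\cF_N$-interval is exactly the right mechanism, and your handling of the boundary case $\lfloor\Delta\rfloor\ne 0$ (with the convention $\delta_N^+=1$ at $\delta=1$) is consistent with how the paper treats $\Delta=P$ in Lemma~\ref{1.11}. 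One could shorten the endgame slightly by noting that $\delta_N^+-\delta=\frac{1}{N}$ together with $x'-x\le\frac{1}{N}$ forces $\delta=x$, so in the two extremal cases one gets $\delta=0$ or $\delta=\frac{N-1}{N}$ exactly; either way $\delta'\ge\frac{1}{N}$ (from the offending coefficient) is contradicted.
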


\begin{cor}
$\deg\Delta\le 1$, $\deg\lfloor i\Delta\rfloor=i-1 \ (1\le i\le N)$ if and only if
\begin{itemize}
\item $\delta\in [\frac{N-1}{N},1)$, $\lfloor N\Delta'\rfloor=0$, or 
\item $\delta'\ge 1-\delta^+_N$ and either $\deg \Delta<1$, or 
$\deg\Delta=1$ and $i\Delta\notin \Z$ for all $1\le i\le N$.
\end{itemize}
\end{cor}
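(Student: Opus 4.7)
The plan is to split the equality $\deg\lfloor i\Delta\rfloor = i-1$ into the two bounds $\deg\lfloor i\Delta\rfloor\ge i-1$ and $\deg\lfloor i\Delta\rfloor\le i-1$, handle each via a clean criterion, and then repackage into the two bullets. The lower bound is precisely the content of Lemma~\ref{1.12}: under $\deg\Delta\le 1$, the inequalities $\deg\lfloor i\Delta\rfloor\ge i-1$ for $1\le i\le N$ are equivalent to the single condition $\delta^+_N + \delta'\ge 1$, which is the first clause of the second bullet.

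For the upper bound I would use the componentwise identity
$$
\deg\lfloor i\Delta\rfloor = i\deg\Delta - \sum_Q \{i\cdot\mult_Q\Delta\},
$$
where $\{\cdot\}$ is the fractional part. Since $\deg\lfloor i\Delta\rfloor$ is an integer not exceeding $i\deg\Delta\le i$, the inequality $\deg\lfloor i\Delta\rfloor\le i-1$ fails precisely when $\deg\Delta=1$ and $i\Delta\in\Z$. Conjoining over $1\le i\le N$ produces the second clause: $\deg\Delta<1$, or $\deg\Delta=1$ with $i\Delta\notin\Z$ for every $1\le i\le N$. Combined with the lower bound, this gives exactly the second bullet.

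To recover the first bullet I would restrict to the top Farey interval $\delta\in[\frac{N-1}{N},1)$. Here $\delta^+_N = 1$, so $\delta'\ge 1-\delta^+_N$ is automatic, and the hypothesis $\deg\Delta\le 1$ forces $\deg\Delta'\le \frac{1}{N}$, hence every coefficient of $\Delta'$ lies in $[0,\frac{1}{N}]$. The hard part is checking that the upper-bound clause in this range is equivalent to $\lfloor N\Delta'\rfloor = 0$, i.e.\ every such coefficient is strictly less than $\frac{1}{N}$: if some coefficient of $\Delta'$ equals $\frac{1}{N}$ then the degree bound forces it to be the only one, giving $\Delta = \frac{N-1}{N}P+\frac{1}{N}P'$ and $N\Delta\in\Z$, which violates the upper bound at $i=N$; conversely, coefficients all strictly less than $\frac{1}{N}$ keep $i\Delta$ non-integral for every $1\le i\le N$ (using that $i\delta\in\Z$ with $\delta\in[\frac{N-1}{N},1)$ forces $\delta=\frac{N-1}{N}$ and $i=N$). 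This last equivalence in the top interval is the only step needing care; everything else is direct assembly from Lemma~\ref{1.12} and the floor identity.
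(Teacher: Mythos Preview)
Your proposal is correct and follows the same approach as the paper: split the equality into the lower bound (handled by Lemma~\ref{1.12}) and the upper bound (handled by the trivial estimate $\deg\lfloor i\Delta\rfloor\le i\deg\Delta\le i$, with equality iff $\deg\Delta=1$ and $i\Delta\in\Z$). The paper's proof is in fact just that one line about the upper bound; your treatment of how the first bullet arises in the top Farey interval is additional detail that the paper leaves to the reader, and your argument there is sound.
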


\begin{proof}
$ \deg\lfloor i\Delta\rfloor\le \deg i\Delta \le i$. Equality if and only if $\deg\Delta=1,i\Delta\in \Z$.
\end{proof}


\subsection{Fractions with bounded numerators}


For $l\in \Z_{\ge 1}$, define $\cA_l$ to be the set of rational numbers 
$x\in (0,1)$ which admit a representation $x=\frac{p}{q}$ with $p,q$ positive integers and $p\le l$. 

Note that $x\in (0,1)$ belongs to $\cA_l$ if and only if $\{\frac{1}{x}\}$ belongs to the Farey set of order $l$. If $x\in \cA_l$, then $\{l'x\}\in \cA_{l'l}$. We have inclusions $\cA_l\subseteq \cA_{l+1}$, and  
$
\max \cA_l=\frac{l}{l+1}.
$

The set $1-\cA_l$ is related to the hyperstandard set~\cite{PS09} associated to $\cF_{l+1}$.

\begin{lem}\label{fs1}
Let $x\ge y$ belong to $\cA_l$, with $x+y=1$. Then 
$x=\frac{p}{q}$ and $y=\frac{q-p}{q}$ for some 
$1\le p\le l, p<q\le 2p, \gcd(p,q)=1$.
In particular, $q\le 2l$.
\end{lem}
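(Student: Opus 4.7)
The plan is to reduce everything to the representation of $x$ in lowest terms. I would write $x=p/q$ with $\gcd(p,q)=1$, and show first that the constraint $p\le l$ passes from any representation to this canonical one. Indeed, since $x\in\cA_l$, there exists a representation $x=p'/q'$ with $p'\le l$ a positive integer; but every such representation is an integer multiple of the lowest-terms one, so $p'=kp$ for some $k\ge 1$, and hence $p\le p'\le l$.

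Next I would compute $y=1-x=(q-p)/q$ and observe that this is already in lowest terms, because $\gcd(q-p,q)=\gcd(-p,q)=\gcd(p,q)=1$. Applying the previous step to $y\in\cA_l$ then forces the canonical numerator $q-p$ of $y$ to satisfy $q-p\le l$. Combining, $q=p+(q-p)\le 2l$, which will give the ``in particular'' clause.

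The remaining inequalities are direct: the hypothesis $x<1$ gives $p<q$; and $x\ge y$ translates to $p/q\ge (q-p)/q$, i.e.\ $q\le 2p$. Together with $p\le l$ and $\gcd(p,q)=1$ from the previous paragraphs, this produces exactly the conclusion $1\le p\le l,\ p<q\le 2p,\ \gcd(p,q)=1$, and then $q\le 2p\le 2l$.

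There is no real obstacle here; the only point that requires a moment's thought is the passage from ``$\cA_l$-representation'' to ``lowest-terms representation with numerator $\le l$'', and that is handled by the divisibility remark in the first paragraph. The rest is bookkeeping.
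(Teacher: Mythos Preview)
Your argument is correct. The paper states this lemma without proof, so there is nothing to compare against; the reduction to lowest terms together with $\gcd(q-p,q)=\gcd(p,q)=1$ is exactly the natural route. One minor remark: your separate appeal to $y\in\cA_l$ to get $q-p\le l$ is not actually needed, since from $x\ge y$ you already obtain $q\le 2p$, and then $q\le 2p\le 2l$ follows using only $x\in\cA_l$; in other words, the hypothesis $y\in\cA_l$ is redundant here.
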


\begin{lem}
	Let $x\in \cA_l$. Let $N$ be the unique positive integer such that $x^+_{N+1}<1\le x^+_N$. Then $N\le l+1$, and equality is attained only if $x=\frac{l}{l+1}$.
\end{lem}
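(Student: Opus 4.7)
The plan is to translate the hypothesis into a clean interval condition on $x$ and then combine it with the $\cA_l$-representation in a single chain of inequalities. First I would observe that $\delta^+_N\le 1$ for every $\delta\in[0,1)$, since the $i=1$ term in the defining minimum equals $1+\lfloor\delta\rfloor=1$. Hence $1\le x^+_N$ is equivalent to $x^+_N=1$, which just says that $x$ lies in the final $\cF_N$-interval $[\frac{N-1}{N},1)$; similarly $x^+_{N+1}<1$ says $x\notin[\frac{N}{N+1},1)$. Thus the defining condition on $N$ is equivalent to
\[
\frac{N-1}{N}\le x<\frac{N}{N+1},
\]
and since these half-open intervals partition $[0,1)$ this also confirms the uniqueness of $N$.

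Next I would feed in the $\cA_l$-structure. Write $x=p/q$ in lowest terms, so that $\gcd(p,q)=1$ and $1\le p<q$; because reducing a fraction can only decrease the numerator, the hypothesis $x\in\cA_l$ forces $p\le l$. The left inequality $\frac{N-1}{N}\le\frac{p}{q}$ rearranges to $N(q-p)\le q$, so setting $k:=q-p\ge 1$,
\[
N\;\le\;\frac{p+k}{k}\;=\;1+\frac{p}{k}\;\le\;1+p\;\le\;l+1.
\]
This proves the bound. For the equality case $N=l+1$, every inequality in the chain must be tight: $p/k\ge l$ combined with $p\le l$ forces $k=1$ and $p=l$, whence $q=l+1$ and $x=l/(l+1)$. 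To close the loop one checks that $\frac{l}{l+1}\in[\frac{l}{l+1},\frac{l+1}{l+2})$, so $N=l+1$ is indeed attained for this $x$.

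I do not expect a genuine obstacle: the content of the lemma is the translation of $x^+_{N+1}<1\le x^+_N$ into the interval condition above; after that the argument collapses to one line. The only mild subtlety is observing that the relevant numerator to compare with $l$ is the reduced numerator, which is immediate since passing to lowest terms never increases the numerator.
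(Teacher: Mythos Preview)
Your proof is correct and follows essentially the same route as the paper: the paper simply records the identity $N=\lfloor \tfrac{1}{1-x}\rfloor$ (which is equivalent to your interval condition $\tfrac{N-1}{N}\le x<\tfrac{N}{N+1}$), writes $x=\tfrac{p}{q}$ in reduced form with $j=q-p$, and computes $N=1+\lfloor p/j\rfloor\le 1+p\le l+1$, with equality forcing $j=1$. Your derivation of the interval condition from the definition of $x^+_N$ is more explicit than the paper's one-line assertion, but the substance is identical.
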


\begin{proof} Note that $N=\lfloor \frac{1}{1-x} \rfloor$.
	Let $x=\frac{p}{q}$ be the reduced form, where $p\le l$.
	Since $x<1$, $j=q-p$ is a positive integer. Then $N=\lfloor \frac{1}{1-x} \rfloor=1+\lfloor \frac{p}{j}\rfloor \le 1+p$. Equality holds if and only if $j=1$.
\end{proof}

\begin{lem}
	Let $1>x\ge y>0$ with $x+y<1$. Let $N$ be the unique positive integer such that $x^+_{N+1}+y<1\le x^+_N+y$. Suppose $x,y$ are rational, with reduced forms $x=\frac{p}{q},y=\frac{p'}{q'}$. Then $N\le (p+1)(p'+1)$, and equality is attained if and only if
	$$
	(x,y)=(\frac{p}{p+1},\frac{p'}{1+p'(p+1)}).
	$$
\end{lem}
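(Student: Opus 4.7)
My plan is to reinterpret $N+1$ as the smallest denominator of a reduced fraction in the open interval $(x,1-y)$. Applying Corollary~\ref{crtNoB} with $\delta=x$ and $\delta'=y$, and noting that $x^+_N\ge 1-y$ is exactly the statement $\cF_N\cap(x,1-y)=\emptyset$, one sees that $N$ is the largest integer for which this Farey set avoids the interval, so $N+1$ is the smallest $i$ for which some reduced $a/i$ satisfies $x<a/i<1-y$.

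I would next invoke the classical Stern--Brocot/Farey fact: for reduced fractions $p/q<r/s$ in $[0,1]$, the smallest denominator of a reduced fraction in $(p/q,r/s)$ equals $q+s$ when $rq-ps=1$ (Farey neighbors, realized by the mediant $(p+r)/(q+s)$), and is otherwise at most $\max(q,s)$. Setting $r/s=1-y=(q'-p')/q'$ (reduced since $\gcd(q'-p',q')=\gcd(p',q')=1$), the Farey condition $(q'-p')q-pq'=1$ rearranges, using $qq'-pq'-p'q\ge 1$ from $x+y<1$, to $(q-p)(q'-p')=pp'+1$. Since positive integers $a,b$ with $ab=K$ satisfy $a+b\le K+1$ with equality iff $\min(a,b)=1$, this yields $q+q'\le (p+1)(p'+1)+1$ in the Farey-neighbor case, with equality iff $\{q-p,q'-p'\}=\{1,pp'+1\}$.

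The main obstacle will be pushing the bound through in the non-Farey situation and handling the range where one of $q,q'$ is large. I would split on whether $\max(q,q')\le (p+1)(p'+1)+1$. If so, both endpoints lie in $\cF_{(p+1)(p'+1)+1}$: either they are Farey neighbors, in which case $q+q'\le(p+1)(p'+1)+1$ places their mediant inside $\cF_{(p+1)(p'+1)+1}$ and strictly between them, or they are not Farey neighbors and hence not consecutive in $\cF_{\max(q,q')}$, so some intermediate fraction of denominator $\le(p+1)(p'+1)+1$ already exists. If instead $q>(p+1)(p'+1)+1$, the fraction $1/(p'+1)$ lies in $(x,1-y)$: it exceeds $x$ because $q>p(p'+1)$, and it is less than $1-y$ because $q'\ge p'+2$, the latter following from $y<1/2$, itself forced by $2y\le x+y<1$. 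The symmetric range $q'>(p+1)(p'+1)+1$ is dispatched by $p/(p+1)$ when $q\ge p+2$, and by the mediant $(p(p'+1)+1)/((p+1)(p'+1)+1)$ when $q=p+1$; in each subcase the denominator is at most $(p+1)(p'+1)+1$.

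For the equality characterization, only the Farey subcase can saturate $N=(p+1)(p'+1)$, forcing $\{q-p,q'-p'\}=\{1,pp'+1\}$. The option $q'-p'=1$ gives $y=p'/(p'+1)$ and $x=p/(p(p'+1)+1)$; a direct comparison shows $x<y$, contradicting $x\ge y$. Hence $q-p=1$ and $q'-p'=pp'+1$, producing $(x,y)=(p/(p+1),\,p'/(1+p'(p+1)))$ as claimed.
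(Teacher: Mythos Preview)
Your approach—recasting $N+1$ as the least denominator of a fraction in the open interval $(x,1-y)$ and invoking Farey/Stern--Brocot structure—is genuinely different from the paper's. The paper works in the reflected interval $(y,1-x)$, writes $q=p+j$, introduces the threshold $y'=p'/(1+p'+\lfloor pp'/j\rfloor)$, bounds $N\le p+p'+j+\lfloor pp'/j\rfloor$, and then optimizes over $j$. Your Farey-neighbor identity $(q-p)(q'-p')=pp'+1$ combined with $a+b\le ab+1$ is a cleaner route to the main inequality $N\le(p+1)(p'+1)$.

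There is, however, a gap in your equality analysis. In the subcase $q=p+1$ and $q'>(p+1)(p'+1)+1$, the fraction you exhibit, $(p(p'+1)+1)/((p+1)(p'+1)+1)$, has denominator exactly $(p+1)(p'+1)+1$, so you obtain only $N\le(p+1)(p'+1)$, not strict inequality. This subcase is non-Farey (indeed $(q'-p')(p+1)-pq'=q'-p'(p+1)>p+2>1$), so your assertion that ``only the Farey subcase can saturate $N=(p+1)(p'+1)$'' is not justified there. The repair is easy: replace your mediant by
\[
\frac{1+pp'}{1+p'(p+1)},
\]
which also lies in $(x,1-y)$ (it equals $1-y_0$ for the extremal $y_0=p'/(1+p'(p+1))$, and here $y<y_0$ since $q'>1+p'(p+1)$) and has denominator $1+p'(p+1)<(p+1)(p'+1)+1$, forcing $N<(p+1)(p'+1)$. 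A similar tightening is needed in your non-Farey case with $\max(q,q')=(p+1)(p'+1)+1$: the intermediate fraction in $\cF_{\max(q,q')}$ is only guaranteed to have denominator $\le(p+1)(p'+1)+1$, so you must rule out equality separately.
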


\begin{proof}
	We have $x<1$. Then $x=\frac{p}{p+j}$ for some positive integer $j$.
	The inequality $y<1-x$ is equivalent to $q'\ge 1+p'+z$, where 
	$z=\lfloor \frac{pp'}{j}\rfloor$. Denote $y'=\frac{p'}{1+p'+z}$. Then $y\le y'<1-x$. 
	
	Note that $N+1$ is the smallest index of a rational number contained in the interval $(y,1-x)$. We have two cases.
	If $y<y'$, then $N+1\le 1+p' + z$. If $y=y'$, then $N+1\le 1+p' + z+p+j$.
	We conclude that $N\le p+p'+j+z$.
	
	We claim that $j+z \ge 1+pp'$ implies
	$j=1$ or $pp'\le j$. Indeed, we deduce $j+\frac{pp'}{j}\ge 1+pp'$.
	If $j>1$, we have $pp'\le j$.
	
	We claim that $x<\frac{1}{2}$ implies $N=1$. Indeed, suppose 
	$N\ge 2$. Then $x^+_N\le \frac{1}{2}$. Then $y\ge \frac{1}{2}$.
	Then $y>x$, a contradiction!
	
	For $x=\frac{1}{2}$, we have $p=j=1$. We have $x> \frac{1}{2}$ if and only if $p>j$. 
	
	We conclude that $N\le p+p'+1+pp'$, and equality is attained only if $j=1$ and $y=y'$.
\end{proof}

\begin{prop}\label{fl1}
Let $x\ge y$ in $\cA_l$ with $x+y<1$. Let $N$ be the unique positive integer such that $x^+_{N+1}+y<1\le x^+_N+y$. Then $N\le (l+1)^2$, and equality is attained only for 
$$
(x,y)=(\frac{l}{l+1},\frac{l}{l^2+l+1}).
$$
\end{prop}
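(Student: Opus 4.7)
The plan is to reduce Proposition~\ref{fl1} almost immediately to the preceding lemma, by maximizing the bound $(p+1)(p'+1)$ over admissible numerators in reduced form.

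First I would check that for $x\in \cA_l$, the reduced form $x=\frac{p}{q}$ satisfies $p\le l$: if $x=\frac{p_0}{q_0}$ is any representation with $p_0\le l$, then $p_0=kp,q_0=kq$ for some $k\in\Z_{\ge 1}$, so $p\le p_0\le l$. Hence for $x,y\in \cA_l$ with reduced forms $\frac{p}{q},\frac{p'}{q'}$ we have $1\le p,p'\le l$. The preceding lemma then gives
$$
N\le (p+1)(p'+1)\le (l+1)^2.
$$

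Next I would extract the equality case. Equality in the second inequality forces $p=p'=l$, so that both $p+1$ and $p'+1$ attain their maximum $l+1$. Equality in the first inequality is governed by the preceding lemma, which asserts $(x,y)=\bigl(\frac{p}{p+1},\frac{p'}{1+p'(p+1)}\bigr)$. Substituting $p=p'=l$ yields exactly $(x,y)=\bigl(\frac{l}{l+1},\frac{l}{l^2+l+1}\bigr)$, as claimed. A brief sanity check confirms this pair does lie in the admissible range: both numerators are $l$, which is $\le l$, so $x,y\in\cA_l$; also $\gcd(l,l^2+l+1)=\gcd(l,1)=1$ so $\frac{l}{l^2+l+1}$ is already in reduced form; one has $x\ge y$ and $x+y=\frac{l(l^2+2l+2)}{(l+1)(l^2+l+1)}<1$ since $(l+1)(l^2+l+1)=l^3+2l^2+2l+1>l(l^2+2l+2)$.

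There is essentially no obstacle here beyond carefully invoking the previous lemma; the only subtlety is the reduction to reduced forms, which is handled above. The content of the proposition is really that the extremal configuration of the previous lemma, at the corners $p=p'=l$, is still attainable inside $\cA_l$, and that it indeed realizes the maximum value $(l+1)^2$ of $N$.
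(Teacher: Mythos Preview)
Your proposal is correct and is exactly the intended argument: the paper states Proposition~\ref{fl1} without proof, immediately after the unlabeled lemma bounding $N$ by $(p+1)(p'+1)$, so the proposition is meant to be read as its direct corollary obtained by maximizing over $p,p'\le l$. Your observation that reduced numerators of elements of $\cA_l$ are at most $l$, together with the equality clause of that lemma, is precisely what is needed.
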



\section{Succesive vanishing}


Let $C$ be a complex smooth projective curve, and $B,L$ $\R$-divisors on $C$ with $B\ge 0$ and $\deg L\ge 0$.

\begin{lem}\label{v1}
$|\lceil K+B+L\rceil|=\emptyset$ if and only if one of the following holds:
\begin{itemize}
\item[1)] $C=\bP^1, L\sim 0$, $B\le P$ for some point $P\in C$.
\item[2)] $C=\bP^1, L\sim Q-\Delta$, $\lfloor \Delta\rfloor=0$, $B\le \Delta$ for some point $Q\in C$. 
\item[3)] $C$ is an elliptic curve, $L\sim Q-P$, $B=0$, for 
some point $Q\ne P$.
\end{itemize}
In particular, $\deg L\le 1$.
\end{lem}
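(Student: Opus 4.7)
The plan is to use Riemann--Roch to cut the genus down to $g(C)\in\{0,1\}$, then analyze the degree and fractional structure of $B+L$ case by case. For $D:=\lceil K+B+L\rceil$ integral, we have $\deg D\ge \deg K=2g-2$, so Riemann--Roch gives $h^0(D)\ge g-1$, and emptiness forces $g\le 1$. On an elliptic curve, taking $K=0$, emptiness of $|\lceil B+L\rceil|$ requires $\deg\lceil B+L\rceil=0$ and $\lceil B+L\rceil\not\sim 0$. Writing $\lceil B+L\rceil=B+L+G$ with $G\ge 0$ and $\lfloor G\rfloor=0$, the degree identity forces $\deg B=\deg L=\deg G=0$, hence $B=0$ and $L$ is integer of degree zero; Abel--Jacobi then gives $L\sim Q-P$ with $Q\ne P$, which is Case~3.

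On $\bP^1$, emptiness is equivalent to $\deg\lceil B+L\rceil\le 1$, and the decomposition above gives $\deg B+\deg L+\deg G\le 1$. If $\deg\lceil B+L\rceil=0$, all three terms vanish and we get Case~1 with $B=0$. If $\deg\lceil B+L\rceil=1$ and some coefficient of $B$ is $\ge 1$, the same degree identity forces $\deg B=1$, $G=0$, $\deg L=0$, and $B$ must be a reduced point, so $B=P$ and $L\sim 0$ (again Case~1).

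The heart of the proof is the remaining sub-case: $\deg\lceil B+L\rceil=1$ with all coefficients $b_R<1$. I would take $\Delta:=\lceil B+L\rceil-L$ as the candidate divisor in Case~2. A pointwise computation based on the identity $\lceil b_R+l_R\rceil=\lfloor l_R\rfloor+\lceil b_R+\{l_R\}\rceil$ yields, upon summation,
\[
\deg\lceil B+L\rceil=\deg\lceil L\rceil + m_1 + m_3,
\]
where $m_1=\#\{R:l_R\in\Z,\,b_R>0\}$ and $m_3=\#\{R:l_R\notin\Z,\,b_R+\{l_R\}>1\}$. For non-integral $L$ with $\deg L\ge 0$ one checks $\deg\lceil L\rceil\ge 1$, and the squeeze $1\le\deg\lceil L\rceil\le\deg\lceil B+L\rceil=1$ forces $m_1=m_3=0$. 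These two vanishings are exactly what is needed to conclude $\lfloor\Delta\rfloor=0$, $B\le\Delta$, and $L+\Delta=\lceil L\rceil\sim Q$ for any $Q\in\bP^1$, placing us in Case~2. For integral $L$, a shorter direct check gives Case~1 (when $\deg L=0$) or Case~2 with $\Delta=0$ (when $\deg L=1$).

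The converse amounts to writing $\lceil K+B+L\rceil$ in each case and reading off the degree; the conclusion $\deg L\le 1$ then follows by inspection. The main obstacle I anticipate is the accounting in the preceding paragraph: identifying the correct $\Delta$ and translating the single scalar inequality $\deg\lceil B+L\rceil\le 1$ into the pointwise inequality $B\le\Delta$.
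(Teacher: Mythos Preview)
Your proof is correct, but it takes a somewhat different, more computational route than the paper in the $\bP^1$ case with $\deg\lceil B+L\rceil=1$.

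The paper opens with Serre duality to obtain the \emph{equality}
\[
-h^0(-\lceil B+L\rceil)=g-1+\deg\lceil B+L\rceil,
\]
which in one stroke gives $g\le 1$, the exact value of $\deg\lceil B+L\rceil$ in each genus, and (for $g=1$) the condition $h^0(-L)=0$. You reach the same constraints via the cruder Riemann--Roch inequality $h^0(D)\ge g-1$ together with the decomposition $\lceil B+L\rceil=B+L+G$; this works, but requires a few extra lines. More substantively, once on $\bP^1$ with $\deg\lceil B+L\rceil=1$, the paper defines $\Delta=\lceil B+L\rceil-L$ immediately, notes that $B\le\Delta$ is automatic, and makes a single dichotomy on whether $\lfloor\Delta\rfloor=0$: if not, degree considerations force $\Delta=P$, $L\sim 0$, $0\ne B\le P$ (Case~1); if so, $L\sim Q-\Delta$ (Case~2). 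By contrast, you first split on whether some $b_R\ge 1$ and whether $L$ is integral, and for the main sub-case introduce the counting identity $\deg\lceil B+L\rceil=\deg\lceil L\rceil+m_1+m_3$ to force $m_1=m_3=0$ and hence $\lfloor\Delta\rfloor=0$. Your approach is more hands-on and makes the pointwise structure of $\Delta$ explicit; the paper's is shorter because the single test $\lfloor\Delta\rfloor=0$ already encodes both of your conditions $m_1=0$ and $m_3=0$. Either way the argument is complete.
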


\begin{proof} By Riemann-Roch and Serre duality, we have 
$$
-h^0(-\lceil B+L\rceil)=g-1+\deg\lceil B+L\rceil.
$$
In particular, $g\le 1$. Note $\deg\lceil B+L\rceil\ge 0$, with
equality if and only if $B=0$ and $L$ is Cartier of degree $0$.
If $g=1$, then $B=0$, $L$ is Cartier of degree zero, and $h^0(-L)=0$
(case 3)).

Suppose $g=0$. Then $\deg\lceil B+L\rceil+h^0(-\lceil B+L\rceil)=1$.

If $\deg\lceil B+L\rceil=0$, then $B=0,L\sim 0$ (case 1)).
Else $\deg\lceil B+L\rceil=1$. Denote $\Delta=\lceil B+L\rceil-L$. 
In particular, $B\le \Delta$.

Case $\lfloor \Delta\rfloor\ne 0$. That is $\Delta\ge P$ for some 
$P\in C$. Since $\deg L\ge 0$ and $\deg \lceil B+L\rceil=1$, 
we obtain $0\ne B\le \Delta=P,L\sim 0$ (case 1)).

Case $\lfloor \Delta\rfloor=0$. Choose any point $Q\in C$. 
Then $L\sim Q-\Delta$ (case 2)).
\end{proof}

\begin{thm}\label{v2} 
Let $N\ge 2$. Then $|\lceil K+B+iL\rceil|=\emptyset$ for all $1\le i\le N$ 
if and only if one of the following holds:
\begin{itemize}
\item[1)] $C=\bP^1, L\sim 0$, $B\le P$ for some point $P\in C$.
\item[2)] $C=\bP^1, L\sim Q-\Delta$, $\lfloor \Delta\rfloor=0$, $B\le \Delta$
and $\deg\lfloor i\Delta-B\rfloor\ge i-1$ for all $1\le i\le N$.
\item[3)] $C$ is an elliptic curve, $L\sim Q-P$, $P\notin \Bs|iQ-(i-1)P|$ for all 
$1\le i\le N$, and $B=0$.
\end{itemize}
\end{thm}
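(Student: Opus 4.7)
My plan is to apply Lemma~\ref{v1} with $i=1$ (valid since $N\ge 2$), which forces the triple $(C,B,L)$ into one of the three structural cases (1), (2), (3) of that lemma. Since $C$, $B$, $L$ do not depend on $i$, the same structural case persists for every $1\le i\le N$, so it suffices to determine, in each case, what additional information the emptiness conditions at $i=2,\ldots,N$ provide.

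Two of the three cases are essentially automatic. In case (1) of v1, $L$ is an integer divisor of degree $0$ on $\bP^1$ and $B=bP$ with $b\in[0,1]$, so a simple degree calculation gives $\deg\lceil K+B+iL\rceil\le -1$ for every $i\ge 1$, and no further constraint appears. In case (3), $C$ is elliptic, $K\sim 0$, $B=0$, and $L$ is a degree-$0$ integer divisor linearly equivalent to $Q-P$, so $|\lceil K+iL\rceil|=|iL|$, which by Riemann--Roch on an elliptic curve is empty iff $iL\not\sim 0$. Translating this through the unique effective point of the degree-$1$ class $|iQ-(i-1)P|$ rephrases it as $P\notin\Bs|iQ-(i-1)P|$, giving case (3) of the theorem.

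The substantive case is (2). After adding an integer principal divisor I choose the representative $L=Q-\Delta$ with $Q\notin\Supp(B)\cup\Supp(\Delta)$. Since $B\le\Delta$, the $\R$-divisor $B-i\Delta$ is non-positive for every $i\ge 1$, so $\lceil B-i\Delta\rceil=-\lfloor i\Delta-B\rfloor$, and
$$
\lceil K+B+iL\rceil\sim K+iQ-\lfloor i\Delta-B\rfloor
$$
on $\bP^1$. Emptiness amounts to the degree of this class being at most $-1$, i.e.\ $\deg\lfloor i\Delta-B\rfloor\ge i-1$; since this is trivial for $i=1$, gathering the inequalities over $1\le i\le N$ recovers case (2). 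The main obstacle is the round-up bookkeeping in case (2) — specifically, choosing the representative $L=Q-\Delta$ so that $Q$ avoids the supports of $B$ and $\Delta$, which is what allows the ceiling to distribute and yields the clean degree formula, and then checking that only the linear equivalence class of $K+B+iL$ modulo integer principal divisors enters the computation.
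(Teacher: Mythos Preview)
Your proposal is correct and follows essentially the same route as the paper: apply Lemma~\ref{v1} at $i=1$ to split into the three structural cases, then in each case translate the emptiness of $|\lceil K+B+iL\rceil|$ for $2\le i\le N$ into the stated numerical or base-locus condition. Your treatment is in fact slightly more careful than the paper's in two places --- you explicitly choose $Q\notin\Supp(B)\cup\Supp(\Delta)$ so that the ceiling distributes cleanly in case~(2), and you spell out the Riemann--Roch argument identifying $|iL|=\emptyset$ with $P\notin\Bs|iQ-(i-1)P|$ in case~(3) --- but the overall argument is the same.
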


\begin{proof} From $N=1$, we have three cases.

In case 1), $K+B+iL\sim -P$ for all $i\ge 1$. Therefore $|\lceil K+B+iL\rceil|=\emptyset$ 
for all $i\ge 1$.

In case 2), $C$ is a rational curve, $L\sim Q-\Delta$ for some point $Q$, 
$\lfloor \Delta\rfloor=0$ and $B\le \Delta$. Note that $\Delta=\lceil L\rceil-L$, so 
$\Delta$ is an intrinsic invariant of the linear equivalence class of $L$. 
We have
$$
K+B+iL\sim K+iQ-(i\Delta-B)\sim (i-2)Q-(i\Delta-B).
$$
So $|\lceil K+B+iL\rceil|=\emptyset$ if and only if $\deg\lfloor i\Delta-B\rfloor\ge i-1$.

Note $\lceil K+\Delta+2L\rceil\sim 0$.

In case 3), $C$ is an elliptic curve, $L\sim Q-P$ and $|iL|=\emptyset$ for every 
$1\le i\le N$. The linear system $|iQ-(i-1)P|$ is fixed, so the last property is equivalent
to $P\notin \Bs|iQ-(i-1)P|$ for all $1\le i\le N$.
\end{proof}

The divisor $L$ may have non-integer coefficients only in case 2). And in this 
case, $\Delta=\lceil L\rceil-L$.

\begin{cor} Let $C=\bP^1$ and $L$ an $\R$-divisor with $\deg L\ge 0$. 
Then $\lceil K+iL\rceil=\emptyset$ for all $1\le i\le N$ 
if and only if there exists $a\in k(C)^\times$ such that
$$
(a)+L\le x'P'-xP,
$$ 
where $P\ne P'$ and $x<x'$ are consecutive elements of $\cF_N$. 
For each $N$, the maximal elements are rational, and finitely
many. For $N=1$, the maximal element is unique, equal to $P'$. 
For $N\ge 2$, there exists an integer $1\le i\le (N-1)N$ such that the linear system
$|iL^{max}|$ is free of degree $1$.
\end{cor}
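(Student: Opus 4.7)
The plan is to derive the corollary from Theorem~\ref{v2} applied with $B=0$ on $C=\bP^1$ (case~(3) of that theorem, being elliptic, does not arise), combined with the Farey dictionary of Section~1.

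For the easy direction, I assume $(a)+L\le x'P'-xP$ with $x<x'$ consecutive elements of $\cF_N$. Monotonicity of round-up and invariance of degree under principal divisors give
\[
\deg\lceil iL\rceil=\deg\lceil i((a)+L)\rceil\le \deg\lceil i(x'P'-xP)\rceil = \lceil ix'\rceil-\lfloor ix\rfloor,
\]
and the first lemma of Section~1 identifies this bound as $1$ for $1\le i\le N$ (since $(x,x')\cap\cF_N=\emptyset$). Thus $\deg(K+\lceil iL\rceil)\le -1$ on $\bP^1$ and $|\lceil K+iL\rceil|=\emptyset$.

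For the converse, I apply Theorem~\ref{v2}. In case~(1), $L\sim 0$; I pick $a$ with $(a)+L=0\le \frac{1}{N}P'$, realizing the inequality with the consecutive pair $0<\frac{1}{N}$ in $\cF_N$. In case~(2), $L\sim Q-\Delta$ with $\lfloor\Delta\rfloor=0$ and $\deg\lfloor i\Delta\rfloor\ge i-1$ for all $i$. Let $P$ carry the largest multiplicity $\delta$ of $\Delta$ and $P'$ the next largest multiplicity $\delta'$; the corollary following Proposition~\ref{crtdiv} gives $\delta^+_N+\delta'\ge 1$. Since any two points of $\bP^1$ are linearly equivalent, I can choose $a$ with $(a)+L=P'-\Delta$, and I set $(x,x')=(\delta^-_N,\delta^+_N)$, consecutive in $\cF_N$ by the formulas at the start of Section~1. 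The pointwise check of $P'-\Delta\le x'P'-xP$ reduces to $\delta\ge \delta^-_N$ at $P$ (the definition of $\delta^-_N$), $\delta'\ge 1-\delta^+_N$ at $P'$ (the corollary above), and $\delta_R\ge 0$ elsewhere.

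For the structural claims, the maximal divisors $L^{\max}=x'P'-xP$ have rational coefficients since $\cF_N\subset\Q$, and modulo $\Aut(\bP^1)$ they are parametrized by consecutive pairs in the finite set $\cF_N$, hence are finitely many. For $N=1$, $\cF_1=\{0,1\}$ has unique consecutive pair $(0,1)$, giving $L^{\max}=P'$. For $N\ge 2$, writing $x=p/q$ and $x'=p'/q'$ in reduced form and taking $i=qq'$, the divisor $iL^{\max}=p'q\,P'-pq'\,P$ is integral of degree $p'q-pq'=1$, so $|iL^{\max}|$ is the complete degree-one linear system on $\bP^1$, a base-point-free pencil. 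The classical Farey bounds $q,q'\le N$ and $q+q'>N$, together with the observation that $q=q'=N$ would force $N\mid 1$, yield $i=qq'\le N(N-1)$. The only subtle step, and the main potential obstacle, is selecting the correct consecutive pair in case~(2): the naive pair $(\delta,1-\delta')$ need not lie in $\cF_N$, so one must use the $\cF_N$-interval $(\delta^-_N,\delta^+_N)$ containing $\delta$ and invoke $\delta^+_N+\delta'\ge 1$ in place of $\delta+\delta'\ge 1$.
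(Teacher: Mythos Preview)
Your proof is correct and is essentially the argument the paper intends: the corollary is stated without proof, and your derivation from Theorem~\ref{v2} (with $B=0$) together with the Farey estimates of Section~1 (Lemma~1.1, Corollary~\ref{crtNoB}/the corollary to Proposition~\ref{crtdiv}, and the classical $p'q-pq'=1$ for consecutive Farey fractions) is exactly the intended route. The one notational point is that the paper only names $\delta^+_N$, not $\delta^-_N$; you might simply write ``the left endpoint $x$ of the $\cF_N$-interval $[x,x')$ containing $\delta$'' to stay within the paper's conventions.
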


\begin{thm} 
$|\lceil K+B+iL\rceil|=\emptyset$ for all $i\ge 1$ if and only if
one of the following holds:
\begin{itemize}
\item $C=\bP^1$, $L\sim 0$, $B\le P$ for some $P$.
\item $C=\bP^1$, $L\sim \epsilon (P_1-P_2)$, $P_1\ne P_2,\epsilon\in (0,1)$, 
and either $B=0$, or $\epsilon$ is rational of index $l$ and $0\ne B\le \frac{1}{l}P_1$.
\item $C$ is an elliptic curve, $L\sim P_1-P_2$, $|iP_1-(i-1)P_2|\ne P_2$ for all $i\ge 1$, $B=0$.
\end{itemize} 
\end{thm}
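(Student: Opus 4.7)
The plan is to take the limit $N\to\infty$ in Theorem~\ref{v2}. The three cases there correspond bijectively to the three bullets of the present theorem; case 1 matches verbatim, and for the elliptic case 3 I would observe that on a smooth curve of genus one the linear system $|iP_1-(i-1)P_2|$ has degree one, so by Riemann--Roch it consists of a single effective divisor (a point), and the condition $P_2\notin\Bs|iP_1-(i-1)P_2|$ is precisely the statement that this divisor is not $P_2$, i.e.\ $|iP_1-(i-1)P_2|\ne P_2$.

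The real content is the translation of case 2. Starting from $L\sim Q-\Delta$ with $\lfloor\Delta\rfloor=0$ and $B\le\Delta$, I must analyze when $\deg\lfloor i\Delta-B\rfloor\ge i-1$ for all $i\ge 1$. First, Proposition~\ref{crtdiv} forces $\lfloor N\Delta''\rfloor=0$ for every $N$, where $\Delta''$ is $\Delta$ with its two highest coefficients stripped; sending $N\to\infty$ gives $\Delta''=0$, so $\Delta=\delta P+\delta'P'$ is supported at at most two points. Second, Proposition~\ref{crt} shows that for large $N$ we must be in its case b), whose lower bound $\delta+\delta'>1-1/(N+1)$ together with $\deg\Delta\le 1$ forces $\delta+\delta'=1$. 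Thus $\deg L=0$, and after choosing a suitable representative we may write $L\sim\epsilon(P_1-P_2)$ with $\epsilon\in(0,1)$.

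Finally, I would extract the constraint on $B=bP+b'P'$ from the limiting form of case b) of Proposition~\ref{crt}. With $\delta'=1-\delta$, it becomes: for every integer $q\ge 1$ and every integer $p$ with $1\le p<q$ and $p>q\delta-b$, one has $p\ge q\delta+b'$; equivalently, no integer $p\in\{1,\dots,q-1\}$ lies in the open interval $(q\delta-b,\,q\delta+b')$. If $\delta$ is irrational, $\{q\delta-b\}$ is dense in $[0,1)$ by Kronecker, forcing $b=b'=0$, i.e.\ $B=0$. If $\delta=p_0/q_0$ in lowest terms, the set of fractional parts $\{q\delta-b\}$ for $q\ge 1$ is finite, indexed by $k\in\{0,\dots,q_0-1\}$, and a direct case analysis on the location of $b$ within an interval of length $1/q_0$ shows that the constraint is equivalent to $B$ being supported at a single point of $\Supp\Delta$ with coefficient at most $1/q_0$. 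The final step is to match this support with the appropriate choice of representative $L\sim\epsilon(P_1-P_2)$ so that $B\le\frac{1}{l}P_1$ with $l=q_0$ the index of $\epsilon$; this bookkeeping is the main subtle point, since swapping $P_1\leftrightarrow P_2$ sends $\epsilon\leftrightarrow 1-\epsilon$, and one must pick the representative whose $P_1$ coincides with the support of $B$.
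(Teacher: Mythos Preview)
Your approach is essentially the paper's: both pass to the limit $N\to\infty$ in Theorem~\ref{v2}, use Proposition~\ref{crt} (forced into case b) for large $N$) together with $\deg\Delta\le 1$ to pin $\Delta$ to two points with $\delta+\delta'=1$, and then analyze the constraint on $B$. The only difference is in this last step. The paper rewrites $\lfloor i\delta-b\rfloor+\lfloor i(1-\delta)-b'\rfloor\ge i-1$ directly as $b+b'\le 1-\{i\delta-b\}$, asserts $b'=0$, and then reads off $b\le 1/l$ in the rational case; your interval-avoidance reformulation via Kronecker is equivalent and arguably cleaner, and your observation that $B$ may a priori be supported at \emph{either} point of $\Supp\Delta$ (absorbed into the statement by the freedom $\epsilon\leftrightarrow 1-\epsilon$, $P_1\leftrightarrow P_2$) is a point the paper's write-up leaves implicit.
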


\begin{proof}
May suppose $C=\bP^1$.
Case $\lfloor \Delta\rfloor\ne 0$. That is $\delta\ge 1$.
Then $\delta=1$. Then $\Delta=P,L\sim 0$ (first case).

Case $\lfloor \Delta\rfloor=0$. Let $P$ be
the point of maximal multiplicity for $\Delta$
(assumed non-zero). Then $\delta<1$. Let 
$N(1-\delta)>1$. Then case a)$_N$ does not occur.
So we are in case b)$_N$. Therefore 
$$
\delta+\delta'>1-\frac{1}{N}.
$$
Letting $N\to \infty$ we obtain
$
\delta+\delta'\ge 1.
$
Then equality holds. Then 
$$
\Delta=\delta P+(1-\delta)P', L\sim (1-\delta)(P-P').
$$
We have $K+B+iL\sim K+iQ-(i\Delta-B)$. Therefore vanishing holds
up to $N$ if and only if 
$$
\lfloor i\delta-b\rfloor+\lfloor i(1-\delta)-b'\rfloor\ge i-1 \ (i\ge 1).
$$
This is equivalent to 
$$
b+b'\le 1-\{i\delta-b\}\ (i\ge 1).
$$
From $i=1$ we deduce $b'=0$. The system of inequalities becomes
$1-b\ge \{i\delta-b\}$ for all $i\ge 1$. If $b=0$, it is satisfied. If $b>0$,
it is satisfied if and only if $\delta$ is rational and $b\le \frac{1}{q}$
where $q$ is the index of $\delta$.
\end{proof}


\subsection{An application}

Let $C$ be a proper smooth curve, let $B$ be an effective $\R$-divisor on $C$ such that $\deg(K+B)\ge 0$.

- Suppose $\deg\lfloor B\rfloor\ge 1$. By Lemma~\ref{v1}, exactly one of the following holds:
\begin{itemize}
	\item[a)] $\deg\lfloor B\rfloor= 1$ and $r(K+B)\sim 0$ for some positive integer $r$, or
	\item[b)] $|\lceil K+\lfloor B\rfloor+n(K+B) \rceil|\ne \emptyset$ for all $n\ge 1$.
\end{itemize}
In case a), we may choose $r$ minimal with this property, and then $|\lceil K+\lfloor B\rfloor+n(K+B) \rceil|\ne \emptyset$ if and only if $r$ does not divide $n$.

- For the rest of this section, suppose $\lfloor B\rfloor=0$. That is, $B$ has coefficients in $[0,1)$.
Let $m\in \Z_{\ge 1}$. Suppose $N\ge 2$ is an integer such that 
$|\lceil K+im(K+B) \rceil|=\emptyset$ for every $1\le i\le N$.

By Theorem~\ref{v2}, $C=\bP^1$ and there are two possibilities:
\begin{itemize}
	\item[1)] $m(K+B)\sim 0$, or
	\item[2)] $m(K+B)\sim Q-\Delta$ where $\lfloor \Delta \rfloor=0$ and $\deg \lfloor i\Delta\rfloor\ge i-1$ for $1\le i\le N$. 
\end{itemize}
Consider case 2). Then $\Delta=\lceil mB\rceil-mB$.  Write $\Delta=\delta P+\delta'P'+\Delta''$, where $P,P'$ are distinct points not contained in the support of $\Delta''$, $1>\delta\ge \delta'\ge 0$ and $\delta'$ is greater or equal to the coefficients of $\Delta''$.
Since $K+B$ is nef, we have $\deg \Delta\le 1$. In particular, $\delta+\delta'\le 1$. We have two cases:
\begin{itemize}
	\item[2a)] Suppose $\delta+\delta'=1$. Then $\Delta=\delta P+(1-\delta)P'$ and $\frac{1}{2}\le \delta<1$. Since $Q\sim P$, we obtain 
	$m(K+B)\sim (1-\delta)(P-P')$.
	\item[2b)] Suppose $\delta+\delta'<1$. By Lem 1.13,  $\deg \lfloor i\Delta\rfloor\ge i-1$ for all $1\le i\le N$ if and only if 
	$1\le \delta^+_N+\delta'$ and $\lfloor N\Delta''\rfloor=0$.
\end{itemize}

By Lemma~\ref{fl1} and Proposition~\ref{fl1}, we obtain

\begin{thm}\label{nvl} Suppose the smallest two (possibly equal) non-zero coefficients of $\{mB\}$ are of the form $1-\frac{p}{q}$, for some positive integers $p,q$ with $p\le l$. Then exactly one of the following holds:
	\begin{itemize}
		\item[a)] $nm(K+B)\sim 0$ for some $1\le n\le 2l$, or
		\item[b)] $|\lceil K+nm(K+B) \rceil|\ne \emptyset$ for some $1\le n\le (l+1)^2+1$.
	\end{itemize}
\end{thm}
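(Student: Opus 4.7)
The plan is to argue by contradiction: set $L=m(K+B)$ and suppose both (a) and (b) fail, i.e., $nL\not\sim 0$ for every $1\le n\le 2l$ and $|\lceil K+nL\rceil|=\emptyset$ for every $1\le n\le N:=(l+1)^2+1$. I would feed the second assumption into Theorem~\ref{v2} (taking the ``$B$'' there to be $0$) to obtain a short list of structural possibilities for $(C,L)$, then use the first assumption together with the $\cA_l$-hypothesis to rule each one out.

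Two of the three cases produced by Theorem~\ref{v2} are immediate. The elliptic case is incompatible with $L=m(K+B)$: since $K\sim 0$ there and the case forces $B=0$, we would get $L\sim 0$, contradicting the required $L\sim Q-P$ with $Q\ne P$. Case~1 gives $L\sim 0$ outright, which makes $1\cdot L\sim 0$ with $1\le 2l$ and contradicts the failure of (a). So only case~2 survives: $C=\bP^1$, $L\sim Q-\Delta$ with $\Delta=\lceil mB\rceil-mB$ (using that $mK$ is integral), $\lfloor\Delta\rfloor=0$, and $\deg\lfloor i\Delta\rfloor\ge i-1$ for $1\le i\le N$. Let $\delta\ge\delta'$ be the two largest coefficients of $\Delta$, attained at points $P$ and $P'$. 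Since $\Delta_P=1-\{mb_P\}$ at points where $\{mb_P\}\ne 0$, the hypothesis that the two smallest non-zero coefficients of $\{mB\}$ are of the form $1-p/q$ with $p\le l$ translates into $\delta,\delta'\in\cA_l$.

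I would now split on $\deg\Delta$. If $\delta+\delta'=1$, then $\Delta=\delta P+(1-\delta)P'$ and, choosing $Q=P$, $L\sim(1-\delta)(P-P')$; Lemma~\ref{fs1} applied to $\delta,1-\delta\in\cA_l$ gives $\delta=p/q$ in lowest terms with $q\le 2l$, so $qL\sim 0$ on $\bP^1$ and (a) is forced with $n=q$, a contradiction. If $\delta+\delta'<1$, the corollary to Proposition~\ref{crtdiv} converts the vanishing inequalities into $\delta^+_N+\delta'\ge 1$, and Proposition~\ref{fl1} with $x=\delta,\ y=\delta'$ bounds any such $N$ by $(l+1)^2$, contradicting $N=(l+1)^2+1$. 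The boundary subcase $\delta'=0$ (when $\{mB\}$ has a unique non-zero coefficient) is handled by the analogous $y=0$ estimate stated just before Proposition~\ref{fl1}, which gives the sharper bound $N\le l+1$.

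The main obstacle I expect is not in the individual steps but in the middle translation: making sure the purely arithmetic hypothesis on the coefficients of $\{mB\}$ really delivers $\delta,\delta'\in\cA_l$ in both subcases, noting that $\cA_l$ does not require the representation $p/q$ to be reduced, so some care is needed when $\delta+\delta'=1$ forces both $\delta$ and $1-\delta$ to lie in $\cA_l$ simultaneously. After that, Theorem~\ref{v2} supplies the geometry, Lemma~\ref{fs1} and Proposition~\ref{fl1} supply the arithmetic, and the rest is a routine case check; the ``exactly one'' phrasing in the conclusion can be secured by taking the minimal $n$ witnessing the appropriate alternative in each of the two subcases, so that (a) is triggered precisely when $\delta+\delta'=1$ and (b) precisely when $\delta+\delta'<1$.
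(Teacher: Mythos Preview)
Your proposal is correct and follows essentially the same route as the paper: reduce via Theorem~\ref{v2} to the rational case with $\Delta=\lceil mB\rceil-mB$, translate the coefficient hypothesis into $\delta,\delta'\in\cA_l$, and then split on $\delta+\delta'=1$ (Lemma~\ref{fs1} gives the bound $2l$ for alternative~(a)) versus $\delta+\delta'<1$ (Proposition~\ref{fl1} gives the bound $(l+1)^2$ for alternative~(b)). You make explicit a few points the paper leaves tacit---the exclusion of the elliptic case via $\deg L=0\Rightarrow B=0$, the degenerate case $\delta'=0$, and the ``exactly one'' clause---but the architecture is the same.
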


The inequality in b) is attained for $C=\bP^1$, $m=1$, $B=(1-\frac{l}{1+l})P+(1-\frac{l}{1+l+l^2})P'$.
This resembles examples considered in~\cite{Kol94}.

\begin{cor} Suppose the smallest two (possibly equal) non-zero coefficients of $B$ are of the form $1-\frac{1}{q}$, 
	for some positive integers $q$ (i.e. standard coefficients). Then exactly one of the following holds:
	\begin{itemize}
		\item[a)] $nm(K+B)\sim 0$ for some $1\le n\le 2$, or
		\item[b)] $|\lceil K+nm(K+B) \rceil|\ne \emptyset$ for some $1\le n\le 5$.
	\end{itemize}
\end{cor}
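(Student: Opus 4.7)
The plan is to specialize Theorem~\ref{nvl} to the parameter $l=1$. Under the standing hypothesis, every nonzero coefficient of $B$ has the form $1-\frac{1}{q}$ for some integer $q\ge 2$; in particular $B$ has coefficients in $[0,1)$, so $\lfloor B\rfloor=0$. Taking $m=1$ gives $\{mB\}=B$, and its smallest two nonzero coefficients are of the form $1-\frac{1}{q}$, which is the form $1-\frac{p}{q}$ with $p=1$. Thus the hypothesis of Theorem~\ref{nvl} is satisfied with $l=1$.

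Next I would simply read off the numerical bounds produced by the theorem: case (a) gives $n\le 2l=2$ and case (b) gives $n\le (l+1)^2+1=5$, which are precisely the bounds appearing in the corollary. The mutual exclusivity of the two alternatives is inherited directly from the corresponding exclusivity in Theorem~\ref{nvl}. Moreover, the sharpness of the bound in case (b) is witnessed by the example immediately following Theorem~\ref{nvl} when $l=1$: namely $C=\bP^1$, $m=1$, and $B=\frac{1}{2}P+\frac{2}{3}P'$, whose coefficients $1-\frac{1}{2}$ and $1-\frac{1}{3}$ are standard.

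I expect no obstacle of substance, since the corollary is a direct numerical specialization. The only point worth flagging concerns the role of the parameter $m$ in the statement. For a general positive integer $m$, a coefficient $1-\frac{1}{q}$ of $B$ produces the coefficient $\frac{r}{q}$ in $\{mB\}$, where $r$ is the remainder of $m$ modulo $q$, and this need not itself be of standard form unless $r$ divides $q$. The most natural reading of the corollary is therefore that it applies whenever $\{mB\}$ retains standard coefficients, in particular for $m=1$; once that reduction is made, the conclusion follows at once from the $l=1$ case of Theorem~\ref{nvl}.
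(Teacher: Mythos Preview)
Your proposal is correct and matches the paper's approach: the corollary is presented without proof, as the immediate specialization of Theorem~\ref{nvl} to $l=1$, yielding the bounds $2l=2$ and $(l+1)^2+1=5$. Your observation about the parameter $m$ is a genuine point: the theorem's hypothesis concerns the smallest two nonzero coefficients of $\{mB\}$, not of $B$, and for a standard coefficient $1-\tfrac{1}{q}$ of $B$ the corresponding coefficient of $\{mB\}$ is $1-\tfrac{r}{q}$ with $r=m\bmod q$, which need not have numerator $1$; the corollary should be read with $\{mB\}$ in place of $B$ (or with $m=1$), and you have identified this correctly.
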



\section{Successive base point}


Let $C/k$ be a nonsingular projective algebraic curve, let $B,L$ be $\R$-divisors
such that $B\ge 0$ and $\deg L\ge 0$. Let $Q\in C$ be a closed point.

\begin{prop}
Let $D$ be a divisor on $C$. Then $Q\in \Bs|K+D|$ if and only if $Q\notin \Bs|Q-D|$.
\end{prop}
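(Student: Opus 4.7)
The plan is to deduce the equivalence from Riemann--Roch applied to the pairs $(K+D, K+D-Q)$ and $(-D, Q-D)$, together with Serre duality. Riemann--Roch gives $\chi(K+D)-\chi(K+D-Q)=1$, and rewriting $h^1(K+D)=h^0(-D)$ and $h^1(K+D-Q)=h^0(Q-D)$ via Serre duality yields
$$
\bigl(h^0(K+D)-h^0(K+D-Q)\bigr)+\bigl(h^0(Q-D)-h^0(-D)\bigr)=1.
$$
Each of the two brackets is a non-negative integer bounded above by $1$, because the inclusions $\cO(K+D-Q)\hookrightarrow \cO(K+D)$ and $\cO(-D)\hookrightarrow \cO(Q-D)$ have cokernels of length $1$ supported at $Q$. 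Hence exactly one of the two brackets equals $0$ and the other equals $1$.

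The next step is to translate each bracket into a base-point condition at $Q$. The first bracket equals $0$ iff every section of $\cO(K+D)$ vanishes at $Q$, which --- with the convention $\Bs|\emptyset|=C$ for empty linear systems --- is precisely $Q\in \Bs|K+D|$. The second bracket equals $1$ iff some $f\in H^0(\cO(Q-D))$ fails to lie in $H^0(\cO(-D))$; equivalently, the effective divisor $(f)+Q-D$ has zero multiplicity at $Q$, which is exactly $Q\notin \Bs|Q-D|$. Combining these two translations with the dichotomy from the previous paragraph gives the asserted equivalence.

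The argument is essentially bookkeeping via Riemann--Roch and I do not expect a serious obstacle. The only point requiring care is the interpretation of a section of $\cO(Q-D)$ ``vanishing at $Q$'', which must account for the pole of order one allowed by the $+Q$ in the sheaf; and the degenerate cases where $|K+D|$ or $|Q-D|$ is empty, which can be verified directly and are consistent with the convention above. A more conceptual alternative would be to use the long exact sequence associated to $0\to \cO(-D)\to \cO(Q-D)\to \cO_Q\to 0$ and identify its coboundary with the Serre dual of the evaluation $H^0(\cO(K+D))\to (K+D)|_Q$, but the Riemann--Roch bookkeeping above seems strictly shorter.
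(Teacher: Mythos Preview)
Your proof is correct and uses essentially the same ingredients as the paper's: the short exact sequence at $Q$ (packaged by you as the Riemann--Roch identity $\chi(K+D)-\chi(K+D-Q)=1$) together with Serre duality to relate the two base-point conditions. The only cosmetic difference is that the paper tracks the map $H^1(K+D-Q)\to H^1(K+D)$ and dualizes it directly, whereas you work with the resulting dimension count; you even mention the paper's variant as your ``more conceptual alternative''.
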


\begin{proof} From the short exact sequence $0\to \cI_Q(K+D)\to \cO_C(K+D)\to \cO_Q\to 0$,
we deduce that $Q\in \Bs|K+D|$ if and only if the homomorphism 
$
H^1(K+D-Q)\to H^1(K+D)
$ 
is not injective. By Serre duality, this means that $\Gamma(-D)\to \Gamma(-D+Q)$ is 
not surjective. That is $Q\notin \Bs|-D+Q|$.
\end{proof}

\begin{thm}\label{sp} $Q\in \Bs|\lceil K+B+L\rceil|$ if and only if one of the following holds:
\begin{itemize}
\item[1)] $L\sim Q-P\ (Q\ne P),B=0$.
\item[2)] $L\sim Q-P$, $0\ne B\le P$.
\item[3)] $L\sim Q-\Delta,\lfloor \Delta\rfloor=0,B\le \Delta$.
\end{itemize}
\end{thm}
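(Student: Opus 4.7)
The plan is to invoke the preceding Proposition, applied with $D=\lceil B+L\rceil$ (noting that $\lceil K+B+L\rceil=K+\lceil B+L\rceil$ since $K$ is integer). It reduces $Q\in\Bs|\lceil K+B+L\rceil|$ to the existence of a section of $\cO_C(Q-\lceil B+L\rceil)$ non-vanishing at $Q$, equivalently, to the existence of an effective integer divisor $E$ with $Q\notin\Supp E$ and $\lceil B+L\rceil+E\sim Q$. The theorem then reduces to classifying the triples $(B,L,E)$ satisfying these constraints.

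The degree count drives everything. Since $B\ge 0$ and $\deg L\ge 0$, we have $\deg\lceil B+L\rceil\ge\deg(B+L)\ge 0$; combined with $\deg E\ge 0$ and $\deg\lceil B+L\rceil+\deg E=1$, exactly two cases arise. If $\deg\lceil B+L\rceil=0$, writing $\Delta'=\lceil B+L\rceil-(B+L)\ge 0$ with coefficients in $[0,1)$, the identity $\deg B+\deg L+\deg\Delta'=0$ forces $B=\Delta'=0$ and $\deg L=0$; so $L$ is integer, $E=P$ is a prime divisor distinct from $Q$, and $L\sim Q-P$, recovering case 1). If instead $\deg\lceil B+L\rceil=1$, then $E=0$ and $\lceil B+L\rceil\sim Q$; setting $\tilde\Delta=\lceil B+L\rceil-L=B+\Delta'\ge 0$, one obtains $L\sim Q-\tilde\Delta$ with $\deg\tilde\Delta=1-\deg L\le 1$. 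When $\lfloor\tilde\Delta\rfloor=0$, taking $\Delta=\tilde\Delta$ yields case 3), using $B\le B+\Delta'=\tilde\Delta$. Otherwise some coefficient of $\tilde\Delta$ is $\ge 1$; since $\tilde\Delta\ge 0$ and $\deg\tilde\Delta\le 1$, this forces $\tilde\Delta=P$ for a closed point $P$, and then $L\sim Q-P$ and $B=bP$ with $b=1-\Delta'(P)\in(0,1]$, recovering case 2).

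The converse directions are direct computations. In case 1), choosing the integer representative $L=Q-P$ gives $\lceil B+L\rceil=Q-P$, and the effective divisor $E=P$ works since $Q\ne P$. In cases 2) and 3), taking $L=Q-P$ and $L=Q-\Delta$ respectively, one computes $\lceil B+L\rceil=Q$ (as the fractional parts $(1-b)P$ and $\Delta-B$ have coefficients in $[0,1)$), so $E=0$ satisfies the required condition trivially. The only bookkeeping subtlety is that $\lceil B+L\rceil$ depends on the chosen representative of $L$, but a shift by a principal divisor propagates through the rounding, leaving the linear class of $\lceil B+L\rceil$ invariant; so the base locus in question is well-defined. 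I anticipate no serious obstacle beyond the case split above.
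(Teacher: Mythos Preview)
Your proof is correct and follows essentially the same route as the paper: both invoke the preceding Proposition to translate $Q\in\Bs|\lceil K+B+L\rceil|$ into $Q\notin\Bs|Q-\lceil B+L\rceil|$, then split on whether $\deg\lceil B+L\rceil$ equals $0$ or $1$, and in the latter case introduce $\Delta=\lceil B+L\rceil-L$ (your $\tilde\Delta$) to separate cases 2) and 3) according to whether $\lfloor\Delta\rfloor$ vanishes. Your write-up is slightly more explicit (introducing the effective divisor $E$, verifying the converse directions, and noting the representative-independence), but the argument is the same.
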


\begin{proof} Our assumption is equivalent to $Q\notin \Bs|Q-\lceil B+L\rceil|$.
In particular, $\deg \lceil B+L\rceil \le 1$.

Case $\deg \lceil B+L\rceil=0$. Then $B=0$, $L$ has integer coefficients and has degree
zero. Then $D=P$ for some point $P\ne Q$. We are in case 1).

Case $\deg \lceil B+L\rceil=1$. Then $D=0$. So $\lceil B+L\rceil\sim Q$.
Denote $\Delta=\lceil B+L\rceil-L$. Then $B\le \Delta$, $L\sim Q-\Delta$.
The property $\lceil B+L\rceil\sim Q$ translates into $\lfloor \Delta-B\rfloor=0$.
If $\lfloor \Delta\rfloor=0$, this property holds (case 3)). If 
$\lfloor \Delta\rfloor\ne 0$, we deduce from $\deg \Delta\le 1$ that 
$\Delta=P$ for some point $P\in C$ and $0\ne B\le P$ (case 2)).
\end{proof}

Remarks:

- If $L_1\sim L_2$, then $\lceil L_1\rceil-L_1=\lceil L_2\rceil-L_2$ and $L_1-\lfloor L_1\rfloor=L_2-\lfloor L_2\rfloor$.

- $\deg\lceil L\rceil$ is $0$ in cases 1),2), and $1$ in case 3). In case 3), $\Delta=\lceil L\rceil-L$.

- We have $\lceil K+B+L\rceil\sim K+Q-P\ (Q\ne P)$ in case 1), and $\lceil K+B+L\rceil\sim K+Q$ in cases 2),3).

- $|K+Q|$ is empty if $C=\bP^1$, $|K|+Q$ if $g\ge 1$. 

- $|K+Q-P|\ (Q\ne P)$ is empty if $g\le 1$, $|K-P-P'|+Q+P'$ if $C$ is hyperelliptic of genus $g\ge 2$ (where $P+P'$
is the fiber of $C\to \bP^1$) , $|K-P|+Q$ if $C$ is non-hyperelliptic of genus $g\ge 2$. 

In particular, if the linear system $|\lceil K+B+L\rceil |$ is not empty, its fixed part has degree $0$, $1$ or $2$. 

- It follows that $B$ is a boundary. And $\lfloor B\rfloor\ne 0$ if and only if $B=P,L\sim Q-P$ (so $P$ is uniquely 
determined by $B$).

\begin{lem} $Q\in \Bs|\lceil K+B+iL\rceil|$ for $1\le i\le 2$ if and only if one of the following holds:
\begin{itemize}
\item[1)] $L\sim Q-P\ (Q\ne P),Q\notin \Bs|2P-Q|, B=0$.
\item[2)] $L\sim 0, 0\ne B\le P\sim Q$.
\item[3)] $L\sim Q-\Delta,\lfloor \Delta\rfloor=0, B\le \Delta, Q\notin \Bs|\lfloor 2\Delta-B\rfloor-Q|$.
\end{itemize}
\end{lem}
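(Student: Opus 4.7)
The plan is to apply Theorem~\ref{sp} separately for $i=1$ and for $i=2$, and intersect the resulting conditions. Using the proposition preceding Theorem~\ref{sp}, the condition $Q\in \Bs|\lceil K+B+iL\rceil|$ is equivalent to $Q\notin \Bs|Q-\lceil B+iL\rceil|$. For $i=1$ Theorem~\ref{sp} yields three cases: (a) $L\sim Q-P$ with $P\ne Q$ and $B=0$; (b) $L\sim Q-P$ with $0\ne B\le P$; and (c) $L\sim Q-\Delta$ with $\lfloor \Delta\rfloor=0$ and $B\le\Delta$. I will compute $\lceil B+2L\rceil$ up to linear equivalence in each of (a), (b), (c) and read off the $i=2$ refinement.

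In case~(a), $2L\sim 2Q-2P$ is integral, so $\lceil B+2L\rceil\sim 2Q-2P$, and the $i=2$ condition becomes $Q\notin \Bs|2P-Q|$; this is case~1) of the lemma. In case~(c), since $\lceil -x\rceil=-\lfloor x\rfloor$ componentwise, $\lceil B+2L\rceil\sim 2Q-\lfloor 2\Delta-B\rfloor$, so the $i=2$ condition becomes $Q\notin \Bs|\lfloor 2\Delta-B\rfloor-Q|$; this is case~3).

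The crux is case~(b). Writing $B=bP$ for some $b\in(0,1]$, we have $B+2L\sim 2Q-(2-b)P$ with $2-b\in[1,2)$, hence $\lceil B+2L\rceil\sim 2Q-P$. The $i=2$ condition reduces to $Q\notin \Bs|P-Q|$, a degree-zero linear system: it equals $\{0\}$ when $P\sim Q$ and is empty otherwise. With the standard convention $\Bs\emptyset=C$, the condition $Q\notin \Bs|P-Q|$ holds precisely when $P\sim Q$, i.e.\ $L\sim 0$. This merges case~(b) into case~2) of the lemma, $L\sim 0$ with $0\ne B\le P\sim Q$. The three resulting cases are pairwise disjoint (distinguished by whether $L$ is integral and whether $B$ vanishes) and collectively exhaust the $i=1$ classification refined by the $i=2$ constraint, which completes the proof.
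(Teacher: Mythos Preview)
Your proof is correct and follows essentially the same approach as the paper: start from the three cases of Theorem~\ref{sp} for $i=1$, then in each case rewrite the $i=2$ condition $Q\notin\Bs|Q-\lceil B+2L\rceil|$ explicitly. Your handling of case~(b) is in fact slightly cleaner than the paper's, which inserts a superfluous (and not quite accurate) intermediate claim that $\deg\lfloor 2P-B\rfloor\ge 1$ forces $B\le\frac{1}{2}P$ before reaching the same reduction to $Q\notin\Bs|P-Q|$.
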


\begin{proof} From $N=1$, we have three cases:

1) $L\sim Q-P$, $Q\ne P$, $B=0$. The new condition is $Q\notin \Bs|Q-2(Q-P)|$. That is $Q\notin \Bs|2P-Q|$.

2) $L\sim Q-P$, $0\ne B\le P$. The new condition is $Q\notin \Bs|Q-\lceil B+2(Q-P)\rceil|$. That is 
$Q\notin \Bs|\lfloor 2P-B\rfloor -Q|$. In particular, $\deg \lfloor 2P-B\rfloor\ge 1$. That is $0\ne B\le \frac{1}{2}P$.
Then $\lfloor 2P-B\rfloor=P$. The condition becomes $Q\notin \Bs|P-Q|$. Therefore $P\sim Q$. Therefore $L\sim 0$.

3) $L\sim Q-\Delta$, $\lfloor \Delta\rfloor=0$, $B\le \Delta$. The new condition is $Q\notin \Bs|Q-\lceil B+2Q-2\Delta\rceil |$.
That is $Q\notin \Bs|\lfloor 2\Delta-B\rfloor-Q|$.
\end{proof}

\begin{thm} Let $N\ge 2$. Then $Q\in \Bs|\lceil K+B+iL\rceil|$ for $1\le i\le N$ if and only if one of the following holds:
\begin{itemize}
\item[1)] $L\sim Q-P\ (Q\ne P),Q\notin \cup_{i=2}^N\Bs|iP-(i-1)Q|, B=0$.
\item[2)] $L\sim 0, 0\ne B\le P\sim Q$.
\item[3)] $L\sim Q-\Delta,\lfloor \Delta\rfloor=0, B\le \Delta, Q\notin\cup_{i=2}^N\Bs|\lfloor i\Delta-B\rfloor-(i-1)Q|$.
\end{itemize}
\end{thm}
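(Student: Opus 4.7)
The plan is to extend the preceding $N=2$ lemma case by case. The key mechanism will be the Proposition $Q\in\Bs|K+D|\iff Q\notin\Bs|Q-D|$, which I would apply with $D=\lceil B+iL\rceil$ at each $1\le i\le N$ to translate the $i$-th vanishing-at-$Q$ condition into an ``avoid $Q$'' condition on a concrete linear system. The $i=1$ condition already isolates us to the three cases of Theorem~\ref{sp}, so the problem reduces to determining, within each case, what further restrictions emerge from the conditions at $2\le i\le N$.

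In case 1 of Theorem~\ref{sp}, $L\sim Q-P$ is integral and $B=0$, so $\lceil K+B+iL\rceil\sim K+iQ-iP$ for every $i$; the Proposition then yields the condition $Q\notin\Bs|iP-(i-1)Q|$, and intersecting over $2\le i\le N$ produces the union in case 1) of the statement. In case 3, the inequality $B\le\Delta$ gives $i\Delta-B\ge(i-1)\Delta\ge 0$, so $\lceil K+B+iL\rceil\sim K+iQ-\lfloor i\Delta-B\rfloor$; the Proposition turns the base-point condition at $Q$ into $Q\notin\Bs|\lfloor i\Delta-B\rfloor-(i-1)Q|$, and again intersecting over $i$ gives case 3).

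Case 2 of Theorem~\ref{sp} is where a genuine reduction happens. The $i=2$ constraint, already analyzed in the preceding lemma, forces $L\sim 0$ and $P\sim Q$ while preserving $0\ne B\le P$. Once this reduced form is in hand, $iL$ is principal for all $i$, hence $\lceil K+B+iL\rceil\sim K+\lceil B\rceil=K+P\sim K+Q$. The Proposition then reduces the $i$-th condition to $Q\notin\Bs|0|$, which is trivially true. Thus no further constraints appear, and case 2) of the statement is the final form.

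The main obstacle will be case 2: verifying that the reduction forced by $i=2$ is stable for every $i\ge 3$, i.e.\ does not spawn a growing list of extra conditions. This should be settled by the observation that, once we are in the reduced form $L\sim 0$, $P\sim Q$, the class $\lceil K+B+iL\rceil\sim K+Q$ is independent of $i$, so the base-point condition at $Q$ collapses to a single autonomous (and trivially satisfied) statement. Cases 1 and 3, by contrast, are essentially routine: the Proposition is applied once for each $i$ and the resulting conditions are packaged into a union, exactly as in the display of the statement.
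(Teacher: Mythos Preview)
Your plan is correct and matches the paper's approach: the paper proves the $N=2$ lemma by running through the three cases of Theorem~\ref{sp} and applying the Proposition to translate the $i=2$ condition, and the general theorem is the obvious extension obtained by repeating that step for each $2\le i\le N$. Your observation that case~2 stabilizes once the $i=2$ reduction forces $L\sim 0,\ P\sim Q$ (so that $\lceil K+B+iL\rceil\sim K+Q$ for all $i$) is exactly the point, and cases~1 and~3 are the routine packaging you describe.
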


\underline{It remains to classify case 3): suppose $L\sim Q-\Delta,\lfloor \Delta\rfloor=0, B\le \Delta$}.

Note $\lfloor i\Delta-B\rfloor\le i\Delta-B\le i\Delta$ and $\deg\Delta\le 1$. Therefore 
$\deg \lfloor i\Delta-B\rfloor\le i$, and equality holds if and only if $B=0,\deg\Delta=1,iB\in \Z$. Therefore
$Q\notin \Bs|\lfloor i\Delta-B\rfloor-(i-1)Q|$ if and only if 
\begin{itemize}
\item $B=0,\deg\Delta=1,i\Delta\in \Z, Q\notin \Bs|i\Delta-(i-1)Q|$, or 
\item $\lfloor i\Delta-B\rfloor \sim (i-1)Q$.
\end{itemize}

\begin{lem}
Let $P_1\ne P_2,P_1\sim P_2$. Then $C\simeq \bP^1$.
\end{lem}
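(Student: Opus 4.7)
The plan is to exploit the definition of linear equivalence directly. Since $P_1\sim P_2$, there exists a rational function $f\in k(C)^\times$ with principal divisor $(f)=P_1-P_2$. I would then interpret $f$ as a morphism $f\colon C\to \bP^1$ by the usual extension of rational maps from a smooth curve to a proper variety.

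Next, I would read off the degree of this morphism from its fiber structure. The zero divisor of $f$ is $P_1$ (with multiplicity one) and its polar divisor is $P_2$ (with multiplicity one). Hence the scheme-theoretic fiber $f^{-1}(0)=P_1$ consists of a single reduced point, so $\deg f=1$.

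A degree-one morphism between smooth projective curves is an isomorphism (it is birational and finite, hence an isomorphism by Zariski's main theorem, or simply because it corresponds to a field extension of degree one). Therefore $C\cong \bP^1$.

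There is no real obstacle here; the statement is essentially the standard fact that a curve carrying a $g^1_1$ is rational. An equivalent way to present the argument, if one prefers a cohomological phrasing, is to note that $h^0(C,\cO_C(P_1))\ge 2$ because both the constant section $1$ and the function $f^{-1}$ lie in it and they are linearly independent (having distinct divisors of zeros $P_1$ and $P_2$); this forces $g(C)=0$ by Clifford or directly, hence $C\simeq \bP^1$.
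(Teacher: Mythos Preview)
Your proof is correct and entirely standard. The paper itself states this lemma without proof, treating it as the well-known fact that a curve carrying two linearly equivalent distinct points is rational; your argument via the degree-one morphism induced by $f$ with $(f)=P_1-P_2$ is exactly the usual justification.
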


Below, if we write $\Delta=\delta_1 P_1+\delta_2 P_2$, we mean $P_1\ne P_2$ too.

\begin{lem} $Q\in \Bs|\lceil K+B+2L\rceil|$ if and only if one of the following holds:
\begin{itemize}
\item $\Delta=\frac{1}{2}P_1+\frac{1}{2}P_2, Q\notin \Bs|P_1+P_2-Q|,B=0$.
\item $\Delta=\frac{1}{2}P_1+\frac{1}{2}P_2,Q\sim P_1,0\ne B\le P_2$.
\item $\lfloor 2\Delta\rfloor\sim Q, B\le \min(\Delta,\{2\Delta\})$.
\end{itemize}
\end{lem}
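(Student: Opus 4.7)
The plan is to combine the dichotomy obtained in the paragraph just preceding the lemma (applied with $i=2$) with a coefficientwise analysis of $\lfloor 2\Delta-B\rfloor$ on $\Supp\Delta$. Throughout I use the standing hypothesis of case~3): $L\sim Q-\Delta$ with $\lfloor\Delta\rfloor=0$ and $B\le\Delta$, which forces $\deg\Delta\le 1$ since $\deg L\ge 0$.

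First I would rewrite $\lceil K+B+2L\rceil\sim K+2Q-\lfloor 2\Delta-B\rfloor$ and, by the proposition at the start of the section, replace $Q\in\Bs|\lceil K+B+2L\rceil|$ by $Q\notin\Bs|\lfloor 2\Delta-B\rfloor-Q|$. The preceding dichotomy, applied with $i=2$, splits this into two mutually exclusive cases: either \textbf{(I)} $B=0$, $\deg\Delta=1$, $2\Delta\in\Z$ and $Q\notin\Bs|2\Delta-Q|$; or \textbf{(II)} $\lfloor 2\Delta-B\rfloor\sim Q$. In case~(I), $\lfloor\Delta\rfloor=0$ confines each coefficient of $\Delta$ to $[0,1)$ and $2\Delta\in\Z$ then pins each to $\{0,\tfrac{1}{2}\}$; with $\deg\Delta=1$ this forces $\Delta=\tfrac{1}{2}P_1+\tfrac{1}{2}P_2$ with $P_1\ne P_2$, giving $2\Delta-Q=P_1+P_2-Q$ and the first bullet.

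The bulk of the argument is case~(II). Since $0\le b_P\le\delta_P<1$, one has $\lfloor 2\delta_P-b_P\rfloor\in\{0,1\}$, with the value $1$ attained precisely when $\delta_P\ge\tfrac{1}{2}$ and $b_P\le 2\delta_P-1=\{2\delta_P\}$. The condition $\deg\lfloor 2\Delta-B\rfloor=1$ forces exactly one such point, while $\deg\Delta\le 1$ bounds the number of points with $\delta_P\ge\tfrac{1}{2}$ by two, with two occurring only if $\delta_{P_1}=\delta_{P_2}=\tfrac{1}{2}$ and $\Supp\Delta=\{P_1,P_2\}$. I would then separate the one-point and two-point sub-cases. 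In the one-point sub-case, the unique maximum $P_0$ satisfies $b_{P_0}\le\{2\delta_{P_0}\}$, and at every other point $\delta_P<\tfrac{1}{2}$ makes $b_P\le\delta_P\le 2\delta_P=\{2\delta_P\}$ automatic; therefore $B\le\{2\Delta\}$ componentwise, so $\lfloor 2\Delta-B\rfloor=\lfloor 2\Delta\rfloor\sim Q$, the third bullet. In the two-point sub-case, after relabeling we have $b_{P_1}=0$ and $0<b_{P_2}\le\tfrac{1}{2}$, so $\lfloor 2\Delta-B\rfloor=P_1\sim Q$ and $0\ne B\le P_2$, the second bullet.

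The converse is a direct coefficient verification: the first bullet lies in (I); for the second, $\lfloor 2\Delta-B\rfloor=P_1\sim Q$ by assumption; for the third, $B\le\{2\Delta\}$ yields $\lfloor 2\Delta-B\rfloor=\lfloor 2\Delta\rfloor\sim Q$. The delicate point, and where I expect the bookkeeping to demand the most care, is the threshold $\delta_P=\tfrac{1}{2}$: it is exactly there that a nonzero $b_P$ can demote the floor from $1$ to $0$ and thereby push the situation from the third bullet into the second, so the clean split of~(II) by the number of coefficients of $\Delta$ reaching $\tfrac{1}{2}$ is what makes the trichotomy in the statement both exhaustive and disjoint.
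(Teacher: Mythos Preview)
Your proof is correct and is essentially the paper's approach: the paper's one-line ``Go through cases 1,2,3 of Theorem~\ref{sp}'' means apply Theorem~\ref{sp} to $2L$, whose three cases map directly to the three bullets, while you reach the same endpoint via the equivalent degree dichotomy stated just before the lemma and a short coefficient analysis in case~(II). The only difference is organizational---the paper's trichotomy immediately separates bullets~2 and~3, whereas you recover that split by counting how many coefficients of $\Delta$ reach $\tfrac{1}{2}$---but the content is the same.
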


\begin{proof}
Go through cases 1,2,3 of Theorem 4.3.
\end{proof}

\begin{lem} $Q\in \cap_{i=2}^3\Bs|\lceil K+B+iL\rceil|$ if and only if one of the following holds:
\begin{itemize}
\item $C\simeq \bP^1$, $\Delta=\frac{1}{2}P_1+\frac{1}{2}P_2,B\le \frac{1}{2}P_2$. $N=+\infty$.
\item $C\simeq \bP^1, \Delta=\frac{2}{3}P_1+\frac{1}{3}P_2, 0\ne B\le \frac{1}{3}P_1$. $N=+\infty$.
\item $\Delta=\frac{2}{3}P_1+\frac{1}{3}P_2,Q\sim P_1, B\le \frac{1}{3}P_2$.
\item $\lfloor i\Delta\rfloor\sim (i-1)Q\ (1\le i\le 3), B\le \min_{i=1}^3\{i\Delta\}$.
\end{itemize}
\end{lem}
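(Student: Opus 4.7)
The plan is to build on the preceding lemma (which classifies the $i=2$ base-point condition into three subcases) by imposing the additional $i=3$ condition on each subcase and then rearranging. Throughout, the duality $Q\in \Bs|K+D|\iff Q\notin \Bs|Q-D|$ established earlier in this section, combined with $L\sim Q-\Delta$, $\lfloor \Delta\rfloor=0$ and $B\le\Delta$, converts $Q\in \Bs|\lceil K+B+iL\rceil|$ into
\[
Q\notin\Bs|\lfloor i\Delta-B\rfloor-(i-1)Q|,
\]
so it is enough to impose this for $i=3$ on top of the three $i=2$ cases.

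For the first two $i=2$ subcases, both having $\Delta=\frac{1}{2}(P_1+P_2)$ and $B$ supported on $\{P_2\}$ with multiplicity at most $\frac{1}{2}$ (possibly $B=0$), a pointwise computation gives $\lfloor 3\Delta-B\rfloor=P_1+P_2$. The $i=3$ condition then reduces to non-emptiness of the degree-zero system $|P_1+P_2-2Q|$, i.e.\ $P_1+P_2\sim 2Q$. Combined with the original $i=2$ hypothesis (the requirement $Q\notin \Bs|P_1+P_2-Q|$ collapsing to $Q\notin \Bs|Q|$ in the first subcase, the identity $Q\sim P_1$ giving $P_2\sim P_1$ in the second), this forces $C\cong\bP^1$, and the two subcases merge into Result~1.

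The bulk of the argument is the third $i=2$ subcase, where $\lfloor 2\Delta\rfloor\sim Q$ and $B\le\min(\Delta,\{2\Delta\})$. Since $\lfloor 2\Delta\rfloor$ is a non-negative integer divisor of degree $1$ bounded by $2\Delta$, it must equal $P_1$ for a single $P_1\sim Q$, so $\delta:=\mult_{P_1}\Delta\in[\frac{1}{2},1)$ and all other coefficients of $\Delta$ lie in $[0,\frac{1}{2})$. A pointwise computation shows that the multiplicity of $\lfloor 3\Delta-B\rfloor$ at $P_1$ is $2$ iff $\delta\ge\frac{2}{3}$ and $\mult_{P_1}B\le 3\delta-2$ (and $1$ otherwise), while at any other $P\in\Supp\Delta$ it is $1$ iff $\mult_P\Delta\ge(1+\mult_PB)/3$ (and $0$ otherwise); the bound $\deg\Delta\le 1$ permits at most one such secondary point, call it $P_2$. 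Grouping by the class of $\lfloor 3\Delta-B\rfloor-2Q$ then yields four profiles: $P_1-2Q$ (empty, discarded); $2P_1-2Q\sim 0$ (always admissible); $2P_1+P_2-2Q\sim P_2$ (admissible unless $Q=P_2$); $P_1+P_2-2Q\sim P_2-P_1$ (admissible only if $P_1\sim P_2$, i.e.\ $C\cong\bP^1$). Matching each admissible profile against the constraints on $(\delta,\mult_{P_2}\Delta,\mult_{P_1}B,\mult_{P_2}B)$ yields Results~2, 3 and~4.

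The main obstacle is the combinatorial bookkeeping in this third subcase: the divisor $\Delta=\frac{2}{3}P_1+\frac{1}{3}P_2$ sits at the intersection of Results~2, 3 and~4 according to whether $B$ is supported on $P_1$ or $P_2$ and whether $C\cong\bP^1$, so I would check that the four results are pairwise disjoint (modulo the symmetry $P_1\leftrightrightarrow P_2$ where it applies) and jointly exhaustive. A related subtlety is that Result~4's cleaner statement $\lfloor i\Delta\rfloor\sim(i-1)Q$ captures exactly those configurations where $\lfloor 3\Delta-B\rfloor=\lfloor 3\Delta\rfloor$ pointwise, while Result~3 absorbs the borderline case $\mult_{P_2}\Delta=\frac{1}{3},\,\mult_{P_2}B>0$ in which subtracting $B$ drops the floor contribution at $P_2$.
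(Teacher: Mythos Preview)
Your approach is correct and essentially the same as the paper's: start from the three $i=2$ cases of the preceding lemma and impose the $i=3$ condition. The only organizational difference is that the paper phrases the new condition as ``go through cases 1), 2), 3) of Theorem~\ref{sp}'' applied to $3L$ (i.e.\ ask whether $3L\sim Q-P_3$ with $B=0$ or $0\ne B\le P_3$, or $3L\sim Q-\Delta_3$ with $\lfloor\Delta_3\rfloor=0$, $B\le\Delta_3$), whereas you unpack the equivalent condition $Q\notin\Bs|\lfloor 3\Delta-B\rfloor-2Q|$ directly and split according to the shape of $\lfloor 3\Delta-B\rfloor$. Your profiles $\{P_1,\,2P_1,\,2P_1+P_2,\,P_1+P_2\}$ are exactly the paper's trichotomy re-expressed: the degree-$3$ profile $2P_1+P_2$ forces $3\Delta\in\Z$, $B=0$ and recovers case~1) of Theorem~\ref{sp} for $3L$; the profile $2P_1$ with the side condition $0\ne B\le\{3\Delta\}$ at the special point recovers case~2); and the profiles where $\lfloor 3\Delta-B\rfloor=\lfloor 3\Delta\rfloor$ of degree~$2$ recover case~3) with $\Delta_3=\{3\Delta\}$. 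The paper's modular phrasing is what makes the induction in Theorem~\ref{4.9} clean, but for $N=3$ your direct computation reaches the same endpoint.
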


\begin{proof}
For each solution in 4.7, go through cases 1,2,3 of Theorem 4.3.
\end{proof}

\begin{thm}\label{4.9} Let $L\sim Q-\Delta,\lfloor \Delta\rfloor=0,B\le \Delta$. Let $N\ge 3$. Then 
$Q\in \cap_{i=1}^N\Bs|\lceil K+B+iL\rceil|$ if and only if one of the following holds:
\begin{itemize}
\item[1)] $C\simeq \bP^1$, $\Delta=\delta P_1+(1-\delta)P_2,\delta\in \Z_N\cap [\frac{1}{2},\frac{N-1}{N})$,
and $B\le \frac{1}{l}P_j$ for $j=1$ or $2$, where $l=\ind(\delta)$.
\item[2)] $C\simeq \bP^1,\Delta=\frac{N-1}{N}P_1+\frac{1}{N}P_2, 0\ne B\le \frac{1}{N}P_1$.
\item[3)] $\Delta=\frac{N-1}{N}P_1+\frac{1}{N}P_2,Q\sim P_1, B\le \frac{1}{N}P_2$.
\item[4)] $\lfloor i\Delta\rfloor\sim (i-1)Q\ (1\le i\le N), B\le \min_{i=1}^N\{i\Delta\}$.
\end{itemize}
\end{thm}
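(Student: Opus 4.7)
The plan is to translate the successive base-point condition into arithmetic constraints on $\Delta$ and $B$, and then apply Propositions~\ref{crt} and~\ref{crtdiv} to classify the solutions. Using the identity $\lceil K+B+iL\rceil\sim K+iQ-\lfloor i\Delta-B\rfloor$ together with the proposition at the start of Section~4, the condition $Q\in\Bs|\lceil K+B+iL\rceil|$ rewrites as $Q\notin\Bs|\lfloor i\Delta-B\rfloor-(i-1)Q|$. As explained in the remarks preceding the theorem, this forces $\deg\lfloor i\Delta-B\rfloor\ge i-1$ and splits, for each $i$, into either the generic scenario ($\deg=i-1$ with $\lfloor i\Delta-B\rfloor\sim(i-1)Q$) or the exceptional one ($B=0$, $\deg\Delta=1$, $i\Delta\in\Z$, with $Q\notin\Bs|i\Delta-(i-1)Q|$).

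Write $\Delta=\delta P_1+\delta' P_2+\Delta''$ with $\delta\ge\delta'$ the two largest coefficients and $P_1,P_2\notin\Supp\Delta''$. The simultaneous degree inequalities for $1\le i\le N$ are exactly the hypothesis of Proposition~\ref{crtdiv}, yielding $\lfloor N\Delta''\rfloor=0$ and reducing the analysis to the dichotomy of Proposition~\ref{crt}. In case (a), where $\delta\ge (N-1+b_{P_1})/N$, one has $\delta\in[\frac{N-1}{N},1)$ and $\delta'+\deg\Delta''\le\frac{1}{N}$; the generic/exceptional alternative at $i=N$ then forces $\Delta=\frac{N-1}{N}P_1+\frac{1}{N}P_2$, and the resulting subcases (namely $B\ne 0$ concentrated at $P_1$ on $\bP^1$, $B\ne 0$ concentrated at $P_2$ with $Q\sim P_1$, or $B=0$ throughout) match cases 2), 3), and a specialization of 4) respectively. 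In case (b), one has $\delta+\delta'>1-\frac{1}{N+1}$ and $\deg\Delta''<\frac{1}{N+1}$; when $\deg\Delta=1$, so $\Delta''=0$ and $\delta+\delta'=1$, combining $\delta^+_N+\delta'\ge 1$ with $\delta=1-\delta'$ pins $\delta\in\cF_N$ of some index $l\le N$, and applying the linear equivalence at $i=l$ (where $l\Delta\in\Z$ so the floor becomes rigid) yields $P_1\sim P_2\sim Q$ and hence $C\simeq\bP^1$---this is case 1), with the bound $B\le\frac{1}{l}P_j$ emerging as the sharpest of the $i$-indexed inequalities. The remaining configurations (in particular $\deg\Delta<1$, or $\Delta$ supported on three or more points) are absorbed into case 4).

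The principal obstacle lies in branch (b): forcing $C\simeq\bP^1$ from the linear equivalences $\lfloor i\Delta-B\rfloor\sim(i-1)Q$ requires a careful choice of test index, specifically $i=l=\ind(\delta)$, at which $i\Delta$ becomes an integer divisor and the equivalence translates directly into a linear relation among $P_1$, $P_2$, and $Q$. A secondary delicate point is the converse direction: in each of the four cases one must verify all $N$ base-point conditions simultaneously, and in case 4) in particular check that $B\le\min_{i=1}^N\{i\Delta\}$ implies $\lfloor i\Delta-B\rfloor=\lfloor i\Delta\rfloor$ so the hypothesis $\lfloor i\Delta\rfloor\sim(i-1)Q$ carries through to every index.
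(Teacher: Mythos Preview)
Your plan diverges from the paper's and contains a genuine structural gap. The paper argues by induction on $N$: assuming the four-case classification at level $N-1$, it imposes the single extra condition $Q\in\Bs|\lceil K+B+NL\rceil|$ via Theorem~\ref{sp} and tracks how each case $(k)_{N-1}$ evolves. In particular, cases $(1)_N,(2)_N,(3)_N$ are produced from $(4)_{N-1}$ precisely when clause 1) or 2) (rather than 3)) of Theorem~\ref{sp} is triggered at $i=N$; this forces $N\Delta\in\Z$ and $\deg\Delta=1$, and only \emph{then} does the specific shape $\Delta=\frac{N-1}{N}P_1+\frac{1}{N}P_2$ (or the rational two-point form in case $(1)_N$) emerge.

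Your direct route through Propositions~\ref{crt} and~\ref{crtdiv} mistakes the role of the (a)/(b) dichotomy. That dichotomy governs only the degree inequalities $\deg\lfloor i\Delta-B\rfloor\ge i-1$, whereas the theorem's four cases are separated by \emph{linear equivalence} data and by whether some $i\Delta$ is integral. Concretely, your claim that case (a) ``forces $\Delta=\frac{N-1}{N}P_1+\frac{1}{N}P_2$'' is false: take $C=\bP^1$, $N=3$, $B=0$, $\Delta=0.9\,P_1$; then $\delta\ge\frac{N-1}{N}$ so (a) holds, yet $\lfloor i\Delta\rfloor=(i-1)P_1\sim(i-1)Q$ for $i\le 3$ and we land squarely in case 4), not 2) or 3). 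Likewise, in your branch (b) the inequality $\delta^+_N+\delta'\ge 1$ is automatic when $\delta+\delta'=1$ (since $\delta^+_N>\delta$), so it cannot ``pin $\delta\in\cF_N$'' as you assert; rationality of $\delta$ enters only through the exceptional branch ($i\Delta\in\Z$ for some $i\le N$), which your degree-based decomposition never isolates. Finally, cases 2) and 3) allow $B\ne 0$, yet your proposed mechanism for reaching them (the exceptional alternative at $i=N$) requires $B=0$; in the paper these cases instead arise when $B\not\le\{N\Delta\}$ so that $\lfloor N\Delta-B\rfloor\ne\lfloor N\Delta\rfloor$, a phenomenon invisible to Proposition~\ref{crt}.
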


\begin{proof} We use induction on $N$. Case $N=3$ do by hand.
Let $N>3$. We consider separately the solutions for $N-1$, and impose the new condition 
$Q\in \Bs|\lceil K+B+NL\rceil|$, using Theorem~\ref{sp}.

Case $(1)_{N-1},(2)_{N-1}$: here one must show $N\to \infty$ (must write down).

Case $(3)_{N-1}$: Let $\Delta=\frac{N-2}{N-1}P_1+\frac{1}{N-1}P_2,Q\sim P_1,B\le \frac{1}{N-1}P_2$.
Since $N\Delta\notin \Z$, only case 3) of Theorem~\ref{sp} may apply for $NL$. The new condition is 
$NL\sim Q-\Delta_N$, $\lfloor \Delta_N\rfloor=0,B\le \Delta_N$.
We obtain $\Delta_N\sim N\Delta-(N-1)Q$. That is $\Delta_N=\{N\Delta\}$ and $\lfloor N\Delta\rfloor\sim (N-1)Q$.
We have 
$$
N\Delta=(N-2+\frac{N-2}{N-1})P_1+(1+\frac{1}{N-1})P_2.
$$ 
Therefore $\lfloor N\Delta\rfloor=(N-2)P_1+P_2$. Then $(N-2)P_1+P_2\sim (N-1)Q$. 
Then $P_2\sim P_1$. Since $P_1\ne P_2$, we obtain $C\simeq \bP^1$. The condition 
$B\le \Delta_N$ is already satisfied. We obtain $C\simeq \bP^1,\Delta=\frac{N-2}{N-1}P_1
+\frac{1}{N-1}P_2,B\le \frac{1}{N-1}P_2$, which belongs to case $(1)_N$.

Case $(4)_{N-1}$: suppose case 3) of Theorem~\ref{sp} applies to $NL$. That is 
$NL\sim Q-\Delta_N$, $\lfloor \Delta_N\rfloor=0,B\le \Delta_N$. As above, we obtain
$\Delta_N=\{N\Delta\}$, $\lfloor N\Delta\rfloor\sim (N-1)Q$. We obtain case $(4)_N$.

Suppose now that cases 1) or 2) of Theorem~\ref{sp} apply to $NL$. That is 
$N\Delta\sim (N-1)Q+P_N$, and either $Q\ne P_N,B=0$, or $0\ne B\le P_N$. We have 
$$
N\Delta\sim (N-1)Q+P_N.
$$

Now $\Delta$ has degree one. It has at least two coefficients.

Case $\delta\ge \frac{N-1}{N}$. Then $\delta\ge \frac{N-1}{N}$. Then $\Delta=\frac{N-1}{N}P_1+\frac{1}{N}P_2$.
The conditions become $P_1\sim Q,P_N\sim P_2$.

Case $\delta<\frac{N-1}{N}$. Then $\delta+\delta'>1-\frac{1}{N+1}$. Since $N\Delta\in \Z$, we deduce
$\Delta''=0$. Therefore $\Delta=\delta P_1+(1-\delta)P_2$, and $i\delta\notin \Z$ for every $1\le i\le N-1$. 
Since $2\Delta\notin \Z$, we have $\delta\ne \frac{1}{2}$. Therefore $\delta\in (\frac{1}{2},\frac{N-1}{N})$. 
Therefore $\lfloor 2\Delta'\rfloor=0$. The condition $\lfloor 2\Delta\rfloor\sim Q$ becomes $P_1\sim Q$.
Set $j=\lceil \frac{1}{1-\delta}\rceil$, so that $\lfloor j(1-\delta)\rfloor=1$. 
We have $\delta<\frac{N-1}{N}$. Since $\delta\in \Z_N$, we obtain
$\delta\le \frac{N-2}{N-1}$. Since $(N-1)\delta\notin \Z$, we obtain $\delta<\frac{N-2}{N-1}$. Therefore 
$j\le N-1$. Then $\lfloor j\Delta\rfloor\sim (j-1)Q$ and $P_1\sim Q$ imply $P_2\sim Q$. Then 
$P_1\sim P_2$. Therefore $C\simeq \bP^1$. We obtained two cases:
\begin{itemize}
\item $\Delta=\frac{N-1}{N}P_1+\frac{1}{N}P_2,P_1\sim Q,P_N\sim P_2$.
\item $C=\bP^1$, $\Delta=\delta P_1+(1-\delta)P_2$, and $\delta<\frac{N-1}{N}$, $\delta\in \Z_N\setminus \Z_{N-1}$.
\end{itemize}
It remains to understand the condition on $B$ too. First case: $(3)_N$ or $(2)_N$. Second $(1)_N$.
\end{proof}

\begin{cor} $Q\in \Bs|\lceil K+B+iL\rceil|$ for every $i\ge 1$ if and only if one of the following holds:
\begin{itemize}
\item[1)] $L\sim Q-P\ (Q\ne P), Q\notin \cup_{i\ge 2}\Bs|iP-(i-1)Q|, B=0$.
\item[2)] $L\sim 0, 0\ne B\le P\sim Q$.
\item[3)] $C\simeq \bP^1$, $L\sim \epsilon (P_1-P_2) \ (P_1\ne P_2),\epsilon \in (0,\frac{1}{2}]$,
and either $\epsilon\notin \Q$ and $B=0$, or $\epsilon\in \Q$ and $B\le \frac{1}{l}P_j$ for $j=1$ or $2$, 
where $l\ge 1$ is minimal such that $l\epsilon\in \Z$.
\end{itemize}
\end{cor}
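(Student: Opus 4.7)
The plan is to apply Theorem~\ref{sp} at $i = 1$, which sorts $(L, B)$ into three structural cases, and then pass to the limit $N \to \infty$ in Theorem~\ref{4.9} for the main case. Case 1) of Theorem~\ref{sp} ($L \sim Q-P$ with $Q \ne P$, $B = 0$) extends to case 1) of the corollary by imposing $Q \notin \Bs|iP-(i-1)Q|$ for every $i \ge 2$. Case 2) of Theorem~\ref{sp} ($L \sim Q-P$, $0 \ne B \le P$), combined with the $i=2$ analysis established earlier in this section, forces $L \sim 0$ and $P \sim Q$; the conditions for $i \ge 2$ are then automatic since $L \sim 0$ makes the linear system independent of $i$, yielding case 2) of the corollary.

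The substantive case is 3) of Theorem~\ref{sp}: $L \sim Q - \Delta$, $\lfloor \Delta \rfloor = 0$, $B \le \Delta$. For each $N \ge 3$ the fixed triple $(L, B, \Delta)$ must lie in one of the four cases of Theorem~\ref{4.9}. Cases 2)$_N$ and 3)$_N$ pin down $\Delta = \frac{N-1}{N}P_1 + \frac{1}{N}P_2$ for one particular $N$ and so cannot persist as $N$ varies. Case 1)$_N$ for all sufficiently large $N$ forces $\delta \in \Q \cap [\tfrac{1}{2}, 1)$ with $B \le \frac{1}{l}P_j$ where $l = \ind(\delta)$, and $C \simeq \bP^1$. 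Since $Q \sim P_1$ on $\bP^1$, we have $L \sim \epsilon(P_1 - P_2)$ with $\epsilon = 1-\delta \in (0, \tfrac{1}{2}] \cap \Q$ and $\ind(\epsilon) = l$, producing the rational $\epsilon$ part of case 3).

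For case 4)$_N$ holding for every $N$, Lemma~\ref{1.11} forces $\Delta = \delta P_1 + (1-\delta)P_2$ with $\delta \in [\tfrac{1}{2}, 1)$. The linear equivalence $\lfloor i\Delta \rfloor \sim (i-1)Q$ at $i = 2$ (with $\delta > 1/2$) gives $P_1 \sim Q$, and at further $i$ with $i\delta \notin \Z$ gives $\lfloor i(1-\delta)\rfloor (P_1 - P_2) \sim 0$. Choosing $i$ with $\lfloor i(1-\delta) \rfloor = 1$ (always available for $\delta < 1$) forces $P_1 \sim P_2$, hence $C \simeq \bP^1$; the boundary $\delta = 1/2$ is excluded by comparing the relations at $i = 2$ and $i = 3$, which together would force $Q \sim 0$. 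The constraint $B \le \min_i \{i\Delta\}$ forces $B = 0$ when $\delta$ is irrational, giving the irrational $\epsilon$ part of case 3); for rational $\delta$ this reduces to case 1)$_N$ already treated. The main obstacle I anticipate is the clean extraction of $C \simeq \bP^1$ from case 4)$_N$ uniformly in $\delta$, since the natural choice $i = \lceil 1/(1-\delta) \rceil$ may coincide with a zero of $\{i\delta\}$ when $\delta = (q-1)/q$ and must be shifted.
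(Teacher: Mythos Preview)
Your approach is essentially the same as the paper's: reduce via the $N$-level classification (Theorem~\ref{4.9}) and let $N\to\infty$, splitting into the ``rational'' branch (case~1)$_N$) and the ``limit of case~4)$_N$'' branch. The argument is correct.

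One refinement removes the obstacle you anticipate at the end. In case~4)$_N$ for all $N$ you have $\deg\lfloor i\Delta\rfloor = i-1$ for every $i\ge 1$, and the Corollary following Lemma~\ref{1.11} then gives not only $\Delta=\delta P_1+(1-\delta)P_2$ but also $\delta\notin\Q$ directly (rational $\delta$ of index $l$ would give $\deg\lfloor l\Delta\rfloor=l$). With $\delta$ irrational, $\{i\delta\}$ never vanishes, so the choice $j=\lceil 1/(1-\delta)\rceil$ always yields $\lfloor j(1-\delta)\rfloor=1$ with $j\delta\notin\Z$, and no shift is needed; this is exactly how the paper proceeds. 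Your separate exclusion of $\delta=\tfrac12$ and the ``rational $\delta$ reduces to case~1)$_N$'' remark then become unnecessary. Likewise, the condition $B\le\inf_{i\ge1}\{i\Delta\}$ forces $B=0$ immediately by equidistribution of $\{i\delta\}$ for irrational $\delta$ (the paper phrases this as ``by diophantine approximation'').
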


\begin{proof} We use the Theorem~\ref{4.9} for every $N$. The first two cases are valid for all $N$. 
Remain two cases:

3a) $C=\bP^1$, $L\sim Q-(\delta P_1+(1-\delta)P_2)$, $\delta\in [\frac{1}{2},1)\cap \Q$ and $B\le \frac{1}{l}P_j$
for some $j=1,2$, where $l$ is the index of $\delta$.

3b) $L\sim Q-\Delta$, $\lfloor \Delta\rfloor=0$, $\lfloor i\Delta\rfloor\sim (i-1)Q$ for all $i\ge 1$, 
$B\le\inf_{i\ge 1}\{i\Delta\}$. By diophantine approximation, the last condition becomes $B=0$. 
By Corollary~\ref{1.12}, $\Delta=\delta P_1+(1-\delta)P_2$ with $\delta\in [\frac{1}{2},1)\setminus\Q$. 
Finally,
$$
\lfloor i\delta\rfloor P_1+\lfloor i-i\delta\rfloor P_2\sim (i-1)Q \ (i\ge 1).
$$
Since $\delta\ne \frac{1}{2}$, we have $\delta'<\frac{1}{2}$. Therefore the condition for $i=2$ becomes 
$P_1\sim Q$. Let $j=\lceil \frac{1}{1-\delta} \rceil$. Then $\lfloor j-j\delta\rfloor=1$. The condition for $j$
becomes $P_2\sim Q$. Therefore $P_1\sim P_2$. Therefore $C\simeq \bP^1$.

Note $L\sim Q-(\delta P_1+(1-\delta)P_2)\sim (1-\delta)(P_1-P_2)$.
\end{proof}



\begin{thebibliography}{HMX10}
	

\bibitem{CCZ05}
Chen, J. A.; Chen, M.; Zhang, D.-Q.,
{\em A non-vanishing theorem for $\Q$-divisors on surfaces.}
{J. Algebra 293 (2005), no.2, 363 -- 384.}

\bibitem{HM06}
Hacon, C.; McKernan, J.,
{\em Boundedness of pluricanonical maps of varieties of general type.}
{Inv. Math. {\bf 166} (1) (2006), 1--25.}

\bibitem{HMX13}
Hacon, C. D.; McKernan, J.; Xu, C.,
{\em On the birational automorphisms of varieties of general type.}
{Ann. of Math. (2)177(2013), no.3, 1077 -- 1111.}

\bibitem{Kol94}
Koll\'ar, J.,
{\em Log surfaces of general type; some conjectures.} 
{Contemp. Math. 162 (1994), 261 -- 275.}

\bibitem{PS09}
Prokhorov, Yu. G.; Shokurov, V. V.,
{\em Towards the second main theorem on complements.}
{J. Algebraic Geom. 18(2009), no.1, 151 -- 199.}

\bibitem{Sho93}
Shokurov, V.V.,
{\em $3$-fold log flips.}
{Russian Acad. Sci. Izv. Math. 40 (1993), 95--202.}


\bibitem{Tak06}
Takayama, S.,
{\em Pluricanonical systems on algebraic varieties of general type.}
{Inv. Math. 165(3) (2006), 551 -- 587.}




\end{thebibliography}
\end{document}